\let\oldmarginpar\marginpar
\renewcommand\marginpar[1]{\-\oldmarginpar[\raggedleft\footnotesize #1]%
{\raggedright\footnotesize #1}}
\theoremstyle{plain}
\newtheorem*{thm*}{Theorem}
\newenvironment{lyxlist}[1]
{\begin{list}{}
{\settowidth{\labelwidth}{#1}
 \setlength{\leftmargin}{\labelwidth}
 \addtolength{\leftmargin}{\labelsep}
 }}
{\end{list}}
\DeclareRobustCommand{\cyrtext}{%
  \fontencoding{T2A}\selectfont\def\encodingdefault{T2A}}
\DeclareRobustCommand{\textcyr}[1]{\leavevmode{\cyrtext #1}}
\def\clap#1{\hbox to 0pt{\hss#1\hss}}
\gdef\SetFigFontNFSS#1#2#3#4#5{} %Silence pointless warnings due to xfig
\theoremstyle{remark}
\newtheorem*{qst*}{Question}
\newtheorem{claim}{Claim}
\newtheorem{lemma}{Lemma}
\newtheorem{corollary}{Corollary}
\theoremstyle{definition}
\theoremstyle{remark}
\newtheorem{remark}{Remark}
\newtheorem*{remark*}{Remark}
\newcommand{\Z}{\mathbb{Z}}
\newcommand{\N}{\mathbb{N}}
\newcommand{\R}{\mathbb{R}}
\newcommand{\CC}{\mathbb{C}}
\def\L{\Lambda}
\def\l{\lambda}
\def\N{\mathbb{N}}
\def\Z{\mathbb{Z}}
\def\R{\mathbb{R}}
\def\1{\mathbbm{1}}
\def\eps{\varepsilon}
\begin{document}

%\title[Riesz bases of for unions of intervals]{Riesz bases of exponentials for finite unions of intervals}
\title{Combining Riesz bases}
\author{Gady Kozma and Shahaf Nitzan}

%\begin{abstract}
%\end{abstract}
\maketitle
\section{Introduction}
Let $S\subset\mathbb{R}^{d}$ be some set, and $f\in L^{2}(S)$. How
can one represent $f$ as a combination of exponentials? Ideally,
one would like to find a sequence $\Lambda\subset\mathbb{R}^{d}$ such that
the functions $\{e^{i\langle\lambda,t\rangle}\}_{\lambda\in\Lambda}$
form an orthonormal basis of $L^{2}(S)$. Such an object would give
each $f$ a unique representation as $f(t)=\sum c_{\lambda}e^{i\langle\lambda,t\rangle}$,
and the coefficients would be easy to calculate. Unfortunately, orthonormal
bases of exponentials are not easy to come by. It goes back to Fuglede
%\marginpar{I added a sentence at the end of paragraph 1}
\cite{F74} that even the ball in two or more dimensions does not
enjoy an orthogonal basis of exponentials. The reason is that orthogonality
is too strong a requirement, it requires that each couple of exponentials
have their difference in the zero set of $\widehat{\1_{S}}$
(a Bessel function, in the case of the ball). See also
\cite{F01,IKT03,T04,KM06,S00}. A similar statement holds for some
simply constructed subsets of $\R$, even the union of as few as two
disjoint intervals may not admit an orthogonal basis of exponentials,
see e.g.\ \cite{IK}.

If one cannot find an orthonormal basis, a Riesz basis is the second
best (see \S\ref{sec:prelim} for precise definitions). A Riesz basis
%\marginpar{changed "would also give" to "also gives"}
also gives each function a unique representation $f(t)=\sum c_{\lambda}e^{i\langle\lambda,t\rangle}$
in a stable manner. Our understanding of the existence of Riesz bases
of exponentials is still lacking. On the one hand, there are relatively
few examples in which it is known how to construct a Riesz basis of
exponentials.
%\marginpar{changed "can be constructed" to "it is known how to construct"}
On the other hand, we know of no example of a set $S$ of positive
measure for which a Riesz basis of exponentials can be shown not to
exist. In particular the question is not settled for the ball in two
or more dimensions.

In this paper we prove the following.
\begin{thm*}
Let $S\subset\mathbb{R}$ be a finite union of intervals. Then there
%\marginpar{I added info to the statement of this theorem}
exists a set $\Lambda\subset\mathbb{R}$ such that the functions $\{e^{i\lambda t}\}_{\lambda\in\Lambda}$
form a Riesz basis in $L^{2}(S)$. Moreover, if $S\subseteq [0,2\pi]$ then $\L$ may be chosen to satisfy $\L\subseteq \Z$.
\end{thm*}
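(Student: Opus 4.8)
The plan is to reduce first to the ``moreover'' statement and then to an inductive \emph{combining} of two sets. An affine change of variable $t\mapsto at+b$ sends a finite union of intervals to another, rescales the $L^2$ norm by a constant, and sends $\{e^{i\lambda t}\}$ to $\{e^{ia\lambda t}\}$ up to unimodular factors, so it preserves the Riesz basis property; choosing $a$ with $|aS|<2\pi$ and translating into $[0,2\pi]$ reduces the first assertion to the second. Hence I work inside $L^2([0,2\pi])$ with the orthonormal basis $e_n=\tfrac{1}{\sqrt{2\pi}}e^{int}$, $n\in\Z$, and search for $\Lambda\subseteq\Z$.

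It helps to recast the target spectrally. Let $P$ be the orthogonal projection of $L^2([0,2\pi])$ onto $L^2(S)$, i.e.\ multiplication by $\1_S$. Then $\{Pe_n\}_{n\in\Z}$ is a Parseval frame of $L^2(S)$, and a short computation shows that $\{e^{int}\}_{n\in\Lambda}$ is a Riesz basis of $L^2(S)$ exactly when $P$ carries $\overline{\spn}\{e_n:n\in\Lambda\}$ isomorphically onto $L^2(S)$, i.e.\ when $L^2([0,2\pi])=\overline{\spn}\{e_n:n\in\Lambda\}\oplus L^2([0,2\pi]\setminus S)$ is a topological direct sum with bounds independent of the vectors. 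Concretely the lower Riesz bound says that the principal submatrix $\big(\tfrac1{2\pi}\int_S e^{i(m-n)t}\,dt\big)_{m,n\in\Lambda}$ of $P$ is bounded below on $\ell^2(\Lambda)$, while completeness is the dual statement for $I-P$ on the complementary indices $\Lambda^c$. The densities match automatically, since $\Lambda$ should have density $|S|/2\pi$.

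The engine is a \textbf{combining lemma}: if $S_1,S_2\subseteq[0,2\pi]$ are disjoint and each of $L^2(S_1),L^2(S_2)$ carries an exponential Riesz basis with frequencies in $\Z$, then so does $L^2(S_1\cup S_2)$. Granting this, the theorem follows by induction on the number of intervals. For the base case, a single interval $[0,\ell]$ with $\ell<2\pi$, the ideal frequencies $\tfrac{2\pi}{\ell}\Z$ form an orthonormal basis; rounding each to the nearest integer produces a separated set of density $\ell/2\pi$ whose displacements from this arithmetic progression, although as large as $1/2$ individually, average to nearly $0$ over long blocks because the rounding errors equidistribute, so Avdonin's ``$1/4$ in the mean'' criterion yields an integer Riesz basis. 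To attack the lemma I would use $L^2(S_1\cup S_2)=L^2(S_1)\oplus L^2(S_2)$ and write the combined synthesis operator as a $2\times2$ operator matrix whose diagonal blocks $T_{11},T_{22}$ are the two given, boundedly invertible, syntheses and whose off-diagonal blocks $T_{12},T_{21}$ record the ``tails'' $e^{i\mu t}|_{S_1}$, $e^{i\lambda t}|_{S_2}$ of each family on the other set.

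The main obstacle is that these off-diagonal blocks \emph{cannot} be made small by any soft device. Translating $S_2$ in space or $\Lambda_2$ in frequency acts unitarily and leaves all the relevant singular values unchanged; indeed, applying $T_{12}$ to a single basis vector $\delta_{\mu_0}$ returns $e^{i\mu_0 t}|_{S_1}$, whose norm already forces $\|T_{12}\|$ to be bounded below by a positive constant depending only on $|S_1|$. Thus the two families are rigidly entangled and no Neumann or Schur-complement perturbation applies. The real work must therefore be a \emph{structural} choice of the integer sets $\Lambda_1,\Lambda_2$, an interleaving adapted to the geometry of $S_1,S_2$, together with a perturbation theory robust enough to tolerate interactions that are bounded but not small; this is the step I expect to be hardest. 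Equally delicate is the bookkeeping of constants: each combining must be performed with Riesz bounds that degrade in a controlled, summable way, so that after the finitely many steps needed to assemble all intervals of $S$ the final bounds remain positive and finite. In the language of the second paragraph, the crux is to produce the oblique complement $L^2([0,2\pi])=\overline{\spn}\{e_n:n\in\Lambda\}\oplus L^2([0,2\pi]\setminus S)$ with a projection whose norm stays under control throughout the induction.
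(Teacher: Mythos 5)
Your reduction to $S\subseteq[0,2\pi]$ with $\Lambda\subseteq\Z$ is fine, and your base case is essentially correct: rounding $\frac{2\pi}{\ell}\Z$ to the nearest integers and invoking Avdonin's theorem is exactly Seip's proof of Lemma \ref{one interval}, which the paper cites (the paper substitutes an elementary argument using only Paley--Wiener stability, purely for self-containment; and note your equidistribution argument needs routine extra care when $2\pi/\ell\in\Q$, where the rounding errors are periodic rather than equidistributed). The genuine gap is your ``combining lemma'': the assertion that if disjoint $S_1,S_2\subseteq[0,2\pi]$ each admit an exponential Riesz basis with integer frequencies, then so does $S_1\cup S_2$. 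You do not prove it; you correctly diagnose that the off-diagonal blocks of the synthesis operator are bounded below in norm, so no perturbative (Neumann or Schur-complement) argument can work, and you then defer the required ``structural choice of $\Lambda_1,\Lambda_2$'' as the step you expect to be hardest. That deferred step \emph{is} the theorem. No unconditional combining lemma of this kind appears in the paper, nor is one known; the student--advisor dialogue in the introduction is precisely a warning that unions of exponential Riesz bases on disjoint sets need not be Riesz bases for the union, and your projection/direct-sum reformulation supplies no mechanism for overcoming the entanglement you yourself exhibit.

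What the paper does instead is a \emph{conditional, structured} combination, and the structure is exactly what your two-set formulation throws away. Fix $N$ and fold $S$ into $[0,1/N]$: let $A_{\ge n}$ be the set of $t\in[0,1/N]$ with $t+j/N\in S$ for at least $n$ values of $j$. Lemma \ref{lem:main} combines Riesz bases $E(\Lambda_n)$, $\Lambda_n\subseteq N\Z$, for the \emph{nested} sets $A_{\ge 1}\supseteq\dotsb\supseteq A_{\ge N}$ into the basis $\bigcup_j(\Lambda_j+j)$ for $S$. Two features make the cross-terms tractable: the spectra occupy distinct residue classes mod $N$, so after folding, the interaction between classes is a fixed $n\times n$ Vandermonde matrix with uniformly bounded inverse; and the nesting permits a triangular scheme --- the key inequalities bound $\|f_n\|^2$ (respectively $\sum_{\lambda\in\Lambda_n}|a_\lambda|^2$) only up to error terms coming from lower (respectively higher) levels, which are then absorbed by summing with exponentially decreasing weights $\delta_n$. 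Neither device survives for two arbitrary disjoint sets with arbitrary integer spectra. Finally, the induction on the number $L$ of intervals does not proceed by splitting off one interval: one chooses $N$, via the number-theoretic Lemma \ref{ihatethislemma} (for infinitely many $N$ the fractional parts $\{Na_i\}$ and $\{Nb_i\}$ do not interlace, proved by an exponential-sum estimate), so that every folded level set $A_{\ge n}$ is a union of at most $L-1$ intervals, to which the inductive hypothesis applies. This arithmetic ingredient has no counterpart in your outline, and without it even the paper's structured combination would not close the induction.
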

Interest in Riesz bases of exponentials for finite unions of intervals
%\marginpar{changed "union" to "unions"}
has its roots in practical applications to sampling of band-limited
signals, and the first partial results came from there. Thus Kohlenberg
\cite{K53} solved the case of two intervals of equal length. Bezuglaya
and Katsnelson \cite{BK93} solved the case that the intervals have
integer end points. Seip \cite{S95} did the general case of two intervals
and a few subcases of three or more intervals. Lyubarskii and Seip
\cite{LS97} give a well-written survey on the problem including a
solution of the case that the intervals have equal lengths (but arbitrary
positions), as well as a proof that a Riesz basis of exponential with
\emph{complex }$\lambda$ always exists. An interesting approach based
on \emph{quasi-crystals} \cite{MM10} allowed to construct
Riesz bases for additional families of unions of intervals, under an
arithmetic condition on the lengths
\cite{KL11,L12}. A control-theory approach was investigated in
\cite{AM99,ABM07} and a reduction to inversions of convolution
integral operators in \cite{K96,LS96}.

Due to the popularity the problem used to enjoy, the following conversation
must have repeated in many places and times:
%\marginpar{added : to the conversation, changed the advisors first answer, added "stability thm" to the students reply}
\begin{lyxlist}{000}
\item [{\textsc{Student:}}] Why is it difficult to construct a Riesz basis
for two intervals? Say their lengths are $\alpha$ and $\beta$, can't
you just take $(1/\alpha)\mathbb{Z}\cup(1/\beta)\mathbb{Z}$?
\item [{\textsc{Advisor:}}] Think of the case $\alpha=\beta$!
\item [{\textsc{Student:}}] OK, but you can move one of the copies a
  little, and then the union forms a Riesz basis, right?
\item [{\textsc{Advisor:}}] How about the case that $\alpha$ and
  $\beta$ are rationally independent? Then no matter how you move two
  copies, the union is not even separated.
\item [{\textsc{Student:}}] Right, just moving will not work in this case. But we can still use the Paley-Wiener stability theorem to perturb each
one a little (so it's still a Riesz basis) in such a way that when
you take the union it will be separated.
\item [{\textsc{Advisor:}}] But how do you show that this is a Riesz basis
for the union?
\item [{\textsc{Student:}}] Eh...
\end{lyxlist}
(see \S\ref{sec:prelim} for Paley-Wiener's stability theorem and the role of
separatedness). The advisor probably knew that the union of two Riesz basis for two
intervals is not necessarily a Riesz basis for the union, even if
they are separated. A simple example comes from taking
%\marginpar{changed "had to made"} by taking
$\{\dotsc,-1\frac{3}{4},-\frac{3}{4},0,\frac{3}{4},1\frac{3}{4},2\frac{3}{4},\dotsc\}$
which is \emph{not} a Riesz basis for $[0,2\pi]$ by Kadec \cite{K64},
and noting that taking the even entries in the series gives a Riesz
basis for $[0,\pi]$ while the odd entries form a Riesz basis for
$[\pi,2\pi]$, both, again, by the Kadec $\frac{1}{4}$ rule. Nevertheless,
the crux of our proof, see \S\ref{sec:union} below, is that under
certain conditions, one can actually take the union and get a Riesz
basis.

Before embarking with the details, we wish to draw the reader's
attention to an interesting connection between the problem of finding
\emph{some} Riesz basis of exponentials on a complicated set, and
characterizing Riesz bases of exponentials on a \emph{single} interval.
For example, Seip \cite{S95} translates the problem of finding a Riesz
basis for two intervals to the problem of finding a Riesz basis of
$[0,1]$ which is a subset of $\alpha\mathbb{Z}$ for some $\alpha<1$;
and \cite{KL11,L12} reduce
%\marginpar{changed "reduces" to "reduce", removed the last sentence in this paragraph}
the problem to asking whether certain
arithmetic sets are Riesz bases for a single interval. The case of
a single interval is amenable to techniques from complex analysis,
leading to deep results by Kadec \cite{K64}, Katsnelson \cite{K71}
Avdonin \cite{A79} and Pavlov \cite{P79,HNP80}. Our approach also
relies on a reduction to a single interval. However, in the interest
of self-containment, we will give a proof that does not use these
results, only the Paley-Wiener stability theorem (see \S\ref{sec:prelim}).

\vfill

\section{preliminaries}\label{sec:prelim}

\subsection{Systems of vectors in Hilbert spaces}

Let $H$ be a separable Hilbert space. A system of vectors $\{f_n\}\subseteq H$ is called a \textit{Riesz basis} if it is the image, under a bounded invertible operator, of an orthonormal basis. This means that $\{f_n\}$ is a Riesz basis if and only if it is complete in $H$ and satisfies the following inequality for all sequences $\{a_n\}\in l^2$,
\begin{equation}\label{riesz sequence}
c\sum|a_n|^2\leq \|\sum a_nf_n\|^2\leq C\sum |a_n|^2,
\end{equation}
where $c$ and $C$ are some positive constants. A system
$\{f_n\}\subseteq H$ which satisfies condition (\ref{riesz sequence}),
but is not necessarily complete, is called a \textit{Riesz sequence}.

A simple duality argument shows that $\{f_n\}$ is a Riesz basis if and only if it is minimal (i.e.\ no vector from the system lies in the closed span of the rest) and satisfies the following inequality for every $f\in H$,
\begin{equation}\label{frame}
c\|f\|^2\leq\sum |\langle f, f_n\rangle|^2\leq C\|f\|^2
\end{equation}
where $c$ and $C$ are some positive constants (in fact, the same constants as in (\ref{riesz sequence})). A system $\{f_n\}\subseteq H$ which satisfies condition (\ref{frame}), but is not necessarily minimal, is called a \textit{frame}.

In particular, this discussion implies the following:
\begin{lemma}\label{rb is rs and frame}
A system of vectors in a Hilbert space is a Riesz basis if and only if it is both a Riesz sequence and a frame.
\end{lemma}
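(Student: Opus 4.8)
The plan is to chase the two characterizations of a Riesz basis already recorded above: that $\{f_n\}$ is a Riesz basis iff it is complete and satisfies (\ref{riesz sequence}), and (via the stated duality argument) iff it is minimal and satisfies (\ref{frame}). Since a Riesz sequence is defined by (\ref{riesz sequence}) alone and a frame by (\ref{frame}) alone, the lemma reduces to trading the two ``missing'' hypotheses---completeness and minimality---against each other.

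For the forward implication I would observe that if $\{f_n\}$ is a Riesz basis then it satisfies (\ref{riesz sequence}) by definition, hence is a Riesz sequence, and it satisfies (\ref{frame}) by the duality characterization, hence is a frame; this direction is immediate.

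For the converse I would assume $\{f_n\}$ is simultaneously a Riesz sequence and a frame, and aim to apply the first characterization. The inequality (\ref{riesz sequence}) is already in hand from the Riesz sequence hypothesis, so the only thing left to check is completeness, and this is exactly what the frame hypothesis supplies: if some $f\in H$ were orthogonal to every $f_n$, then $\sum_n|\langle f,f_n\rangle|^2=0$, and the lower bound in (\ref{frame}) would force $c\|f\|^2\le 0$, i.e.\ $f=0$. Hence the orthogonal complement of the span is trivial, so the system is complete, and combined with (\ref{riesz sequence}) it is a Riesz basis.

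I do not anticipate a genuine obstacle; the entire content is in invoking the correct equivalent description at each step. The one point to get right is the direction of the trade---the frame property is used only to produce completeness (through its lower bound), while the Riesz sequence property supplies the norm equivalence---so that together they match the definition of a Riesz basis verbatim. One could instead run the argument through the second characterization, deducing minimality from the lower bound in (\ref{riesz sequence}) and quoting (\ref{frame}) for the frame half, but the completeness route above is the shortest.
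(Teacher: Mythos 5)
Your proof is correct and matches the argument the paper intends: the paper gives no separate proof, stating the lemma as an immediate consequence of the two characterizations (complete plus (\ref{riesz sequence}), minimal plus (\ref{frame})), and your converse---extracting completeness from the lower frame bound applied to a vector orthogonal to the span---is exactly the implicit step. Nothing to add.
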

%\marginpar{I changed the last paragraph here, they way it was written was not correct. Also added a proof, mainly because at this point it amuses me that we are proving every word we write...you are welcome to remove the proof}
If $\{f_n\}$ is a Riesz basis then there exists a
{\em dual system} $g_n$ with $\langle f_n,g_m\rangle = \1_{\{n=m\}}$ and
$\|g_n\|\le C$. Indeed, if $\{\phi_n\}$ is an orthonormal basis in $H$ and $T:H\mapsto H$ is a bounded invertible operator which satisfies $T\phi_n=f_n$ then the system $g_n:=(T^*)^{-1}\phi_n$ has the required properties.
\subsection{Density and systems of exponentials}

%\marginpar{changed the last line in the first paragraph }
We give a short and partial review of results connecting the
properties of a sequence of exponentials and the density of its
corresponding frequencies, due to Landau. These results will
\textbf{not} be used in any of our proofs. But they are useful to keep
in mind while reading \S 3 below.

Let $\L=\{\l_n\}$ be a \textit{separated sequence} of real numbers, i.e.
\[
|\l_n-\l_m|>\delta\qquad \forall n\neq m,
\]
for some positive constant $\delta$. Let us define the upper and lower
densities %The sequence $\L$ is called \textit{uniformly distributed} if%\marginpar{changed $\{$ to $|$ in the formula - for the cardinality of the sets}
\[
{D}^-(\L)=\lim_{r\rightarrow\infty}\frac{\min_{|I|=r}|\L\cap
  I|}{r}\qquad
{D}^+(\L)=\lim_{r\rightarrow\infty}\frac{\max_{|I|=r}|\L\cap I|}{r}.
\]
The sequence $\L$ is called \emph{uniformly distributed} if
${D}^-(\L)={D}^+(\L)$. In this case the common value is called the uniform density of $\L$ and denoted by $D(\L)$.

Given a set $S\subseteq \R$ of positive measure, consider the system of exponentials
\[
E(\L)=\{e_{\l}\}_{\l\in\L},\qquad e_\l(t)=e^{2\pi i\l t},
\]
in the space $L^2(S)$. The results of Landau give necessary conditions, in terms of the density of $\L$, for the system $E(\L)$ to be a Riesz sequence or a frame in the space \cite{L67a} (see also \cite{NO}).
\begin{thm*}[Landau]
Let $\L$ and $S$ be as above and assume that $\L$ is separated and $S$ is bounded.%\marginpar{removed the $2\pi$ from landau's thm - because we added it to the def of $e_l(t)$.}
\begin{itemize}
\item [i.] If $E(\L)$ is a Riesz sequence in $L^2(S)$ then ${D}^+(\L)\leq |S|$.
\item [ii.] If $E(\L)$ is a frame in $L^2(S)$ then ${D}^-(\L)\geq |S|$.
\end{itemize}
\end{thm*}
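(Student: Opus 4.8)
The plan is to pass to the Fourier side, reformulate both statements as density conditions for \emph{stable sampling} and \emph{stable interpolation} in a Paley--Wiener space, and then compare the dimensions of two families of finite-dimensional subspaces: the ``time--frequency concentrated'' subspaces attached to an interval $I$, and the subspaces spanned by the reproducing kernels at the points of $\L\cap I$. First I would set up the transference. Identifying $L^2(S)$ with $\B:=\{g\in L^2(\R):\supp\ft g\subseteq S\}$ via the Fourier transform, the pairing $\langle f,e_\l\rangle_{L^2(S)}$ becomes the value $F(\l)$ of the corresponding $F\in\B$. Consequently $E(\L)$ is a frame in $L^2(S)$ if and only if $\L$ is a set of stable sampling, $c\|F\|^2\le\sum_\l|F(\l)|^2\le C\|F\|^2$ for all $F\in\B$, and $E(\L)$ is a Riesz sequence if and only if $\L$ is a set of stable interpolation, equivalently the reproducing kernels $\{k_\l\}$ form a Riesz sequence in $\B$. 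Thus (i) becomes: interpolation forces $D^+(\L)\le|S|$, and (ii) becomes: sampling forces $D^-(\L)\ge|S|$.

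The central tool is the concentration (Toeplitz) operator. For an interval $I$ define $T_I:\B\to\B$ by $T_I F=P_\B(\1_I F)$; it is self-adjoint with $0\le T_I\le 1$. Writing $k(x,y)=\ft{\1_S}(x-y)$ for the reproducing kernel, one computes $\mathrm{tr}\,T_I=\int_I k(x,x)\,dx=|S|\,|I|$ and $\mathrm{tr}(T_I-T_I^2)=\int_I\int_{I^c}|k(x,y)|^2\,dy\,dx=o(|I|)$, the latter being a boundary-layer integral that is small because $\ft{\1_S}\in L^2$ with decaying tails; since $T_{I+a}$ is unitarily equivalent to $T_I$ by modulation, all such estimates are uniform over translates of $I$. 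From $\sum_j\mu_j=|S||I|$ and $\sum_j\mu_j(1-\mu_j)=o(|I|)$ the eigenvalues $\mu_j$ cluster near $0$ and $1$: for each fixed $\eps\in(0,1)$ the subspace $V_I^\eps$ spanned by the eigenfunctions with $\mu_j>1-\eps$ satisfies $\dim V_I^\eps=|S||I|+o(|I|)$, and every $F\in V_I^\eps$ obeys $\int_{I^c}|F|^2\le\eps\|F\|^2$.

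Next comes the dimension comparison, which I would carry out in detail for (ii) and dualize for (i). Stable sampling yields a bounded reconstruction $\mathcal R$ with $\mathcal R R=\mathrm{id}$, where $RF=(F(\l))_\l$. Enlarge $I$ to $I'$ by margins of width $h=h(|I|)\to\infty$ with $h=o(|I|)$, and set $AF=\mathcal R(\pi_{I'}RF)$, a finite-rank operator with $\mathrm{rank}\,A\le|\L\cap I'|$. On $V_I^\eps$ one has $\|F-AF\|\le\|\mathcal R\|\,(\sum_{\l\notin I'}|F(\l)|^2)^{1/2}$, and the concentration of $F$ together with the Bessel (Plancherel--P\'olya) bound --- available precisely because $\L$ is separated --- makes the right-hand side strictly below $\|F\|$ uniformly on $V_I^\eps$; hence $A$ is injective there and $\dim V_I^\eps\le|\L\cap I'|$. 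With $|\L\cap I'|\le|\L\cap I|+o(|I|)$ (margins, again via separation) this gives $|\L\cap I|\ge|S||I|-o(|I|)$, i.e.\ $D^-(\L)\ge|S|$. Part (i) is the adjoint statement: stable interpolation makes $\{k_\l\}$ a Riesz sequence, so the $|\L\cap I|$ nearly orthogonal kernels attached to $\L\cap I$ can be fitted, up to an $o(|I|)$ boundary correction, into the subspace $V_I^\eps$ they nearly populate, yielding $|\L\cap I|\le\dim V_I^\eps+o(|I|)=|S||I|+o(|I|)$ and hence $D^+(\L)\le|S|$.

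The main obstacle is obtaining the sharp constant $|S|$ with no loss, which is exactly the control of the spectral transition (``plunge'') region of $T_I$ and of the leakage of samples of concentrated functions across $\partial I$. Bounding both by $o(|I|)$ is the content of the Landau--Widom eigenvalue asymptotics, and the delicate feature is that for $S$ a finite union of intervals $\ft{\1_S}$ decays only like $1/|x|$, so the plunge region has size of order $\log|I|$ and the leakage estimates carry logarithmic factors; these are harmless against the linear $|I|$ in the density but must be handled carefully rather than by a naive term-by-term summation over $\L\cap I$. Throughout, the separatedness hypothesis enters to guarantee the upper Bessel bound and to ensure that the enlarged margins $I'\setminus I$ contain only $o(|I|)$ points.
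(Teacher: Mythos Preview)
The paper does not give its own proof of this theorem. It is stated purely as background (``These results will \textbf{not} be used in any of our proofs. But they are useful to keep in mind while reading \S 3 below'') and is attributed to Landau's original paper \cite{L67a}, with a pointer also to \cite{NO}. So there is nothing in the paper to compare your argument against.

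That said, the approach you outline is essentially Landau's original one: pass to the Paley--Wiener space, study the time--frequency concentration operator $T_I=P_\B M_{\1_I}$, use the trace identities $\mathrm{tr}\,T_I=|S|\,|I|$ and $\mathrm{tr}(T_I-T_I^2)=o(|I|)$ to show that the eigenvalues cluster near $0$ and $1$, and then run a dimension-counting argument comparing the ``well-concentrated'' eigenspace with the span of reproducing kernels at $\L\cap I$. Your sketch is on the right track and the ingredients you name (Plancherel--P\'olya/Bessel bound from separatedness, margin enlargement, finite-rank comparison) are exactly those that make the argument go through. One point to be a bit more careful with: in part (i) you say the kernels $k_\l$ for $\l\in\L\cap I$ ``nearly populate'' $V_I^\eps$, but the cleaner way to run that direction is the one dual to your (ii) --- show that the orthogonal projection from $V_{I'}^\eps$ (for a slightly enlarged $I'$) onto $\spn\{k_\l:\l\in\L\cap I\}$ is surjective, using that the Riesz-sequence lower bound and the concentration of $k_\l$ inside $I'$ make the projection close to the identity there. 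As written, ``can be fitted \ldots into $V_I^\eps$'' is a bit vague about which map is injective or surjective.
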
%\marginpar{I changed the sentence just after landau's thm-added separation, also changed the beginning of remark 1}
If $E(\L)$ is a Riesz basis of $L^2(S)$ then $\L$ must
be separated (see below). We conclude from Landau's theorem that $\L$
is uniformly distributed with $D(\L)= |S|$.

\begin{remark}\label{bessel condition}%\marginpar{added fourier transform to the proof in this remark}
 If $E(\L)$ is a Riesz basis for a bounded set, then $\L$ must be
 separated. Indeed, if $\l,\mu\in\L$ and $|\l-\mu|<\epsilon$ then
 there exists a corresponding element of the dual system $g$ whose Fourier transform $G$ satisfies
 $G(2\pi\l)=1$ and $G(2\pi\mu)=0$. This implies that
 $\|G'\|_\infty>1/2\pi\epsilon$. Combining this with the fact that
 $g$ is supported on a bounded set leads to a contradiction to the
 fact that the dual system has bounded norms.

 On the other hand, for a bounded set $S$ the separation condition on $\L$ implies that
 the system $E(\L)$ satisfies the right inequality in (\ref{frame})
 for the space $L^2(S)$ (equivalently, the right inequality in
 (\ref{riesz sequence})). See, for example, \cite{Y01}.
\end{remark}

\subsection{Stability of systems of exponentials}

%\marginpar{I changed the first paragraph here}
In the first step of our construction we will use a theorem of Paley and Wiener regarding the stability of Riesz bases of exponentials over single intervals \cite{PW34}.
For the completeness of the paper we add a short proof of the theorem, which is applicable also for general spectra.
\begin{thm*}[Paley \& Wiener]%\marginpar{we can remove the separation requirements from both sequences in PW thm - they follow from the conditions - but it is not necessary-as you wish}
Let $S\subseteq \R$ be a bounded set of positive measure and $\L$ be a
sequence of real numbers such that $E(\L)$ is a Riesz basis in
$L^2(S)$. Then there exists a constant $\eta=\eta(S,\L)$ such that if a sequence $\Gamma=\{\gamma_n\}$ satisfies
\[
|\l_n-\gamma_n|<\eta\qquad\forall n
\]
then $E(\Gamma)$ is also a Riesz basis in $L^2(S)$.
\end{thm*}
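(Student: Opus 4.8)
The plan is to prove the Paley--Wiener stability theorem directly from the definition of a Riesz basis, using the characterization in Lemma~\ref{rb is rs and frame} together with the fact that a Riesz basis possesses a dual system of uniformly bounded norm. The starting point is that $E(\L)$ is the image of an orthonormal basis under a bounded invertible operator $T$, so by \eqref{riesz sequence} there are constants $0<c\le C$ with $c\sum|a_n|^2\le\|\sum a_n e_{\l_n}\|^2\le C\sum|a_n|^2$ for all $\{a_n\}\in\ell^2$, and there is a dual system $\{g_n\}$ with $\langle e_{\l_n},g_m\rangle=\1_{\{n=m\}}$ and $\|g_n\|\le C'$ for some constant $C'$.

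The key estimate is to control $\|e_{\l_n}-e_{\g_n}\|_{L^2(S)}$ in terms of $|\l_n-\g_n|$. Since $S$ is bounded, say $S\subseteq[-R,R]$, a direct computation gives
\begin{equation}\label{eq:pw-pointwise}
\|e_{\l_n}-e_{\g_n}\|_{L^2(S)}^2=\int_S|e^{2\pi i\l_n t}-e^{2\pi i\g_n t}|^2\,dt\le |S|\cdot\sup_{t\in S}|2\pi(\l_n-\g_n)t|^2\le 4\pi^2 R^2|S|\cdot|\l_n-\g_n|^2,
\end{equation}
so $\|e_{\l_n}-e_{\g_n}\|\le K|\l_n-\g_n|$ with $K=2\pi R|S|^{1/2}$. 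First I would use this to show that $E(\G)$ remains a Riesz sequence: for $f=\sum a_n(e_{\g_n}-e_{\l_n})$ one estimates, via Cauchy--Schwarz applied to the dual expansion or directly by bounding the perturbation operator $\sum a_n(e_{\g_n}-e_{\l_n})$ in norm, that $\|\sum a_n(e_{\g_n}-e_{\l_n})\|\le K\eta(\sum|a_n|^2)^{1/2}$ once $|\l_n-\g_n|<\eta$. Choosing $\eta$ small enough that $K\eta<\sqrt{c}$ makes this perturbation a genuine contraction relative to the lower Riesz bound of $E(\L)$, so that $\|\sum a_n e_{\g_n}\|\ge(\sqrt c-K\eta)(\sum|a_n|^2)^{1/2}$, yielding the lower bound in \eqref{riesz sequence}; the upper bound follows similarly, giving the Riesz sequence property for $E(\G)$.

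It then remains to upgrade ``Riesz sequence'' to ``Riesz basis,'' i.e.\ to establish completeness (equivalently, by Lemma~\ref{rb is rs and frame}, the frame property). The clean way is a Neumann series argument: define the bounded operator $U\colon H\to H$ by $U e_{\l_n}=e_{\g_n}$ extended linearly, and observe that $U-I$ has operator norm at most $K\eta\cdot(\text{stability factor})$, which is less than $1$ for $\eta$ small; hence $U$ is invertible, and since $U$ maps the Riesz basis $E(\L)$ onto $E(\G)$, the latter is again a Riesz basis. To make $U$ well-defined and bounded one uses the dual system: the natural candidate is $U f=f+\sum_n\langle f,g_n\rangle(e_{\g_n}-e_{\l_n})$, whose operator norm of the second term is controlled by $(\sum_n\|g_n\|^2\,\|e_{\g_n}-e_{\l_n}\|^2)^{1/2}$-type bounds combined with the frame inequality for $\{g_n\}$. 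The main obstacle, and the point demanding care, is precisely verifying that this perturbation series converges in operator norm uniformly in $f$: the pointwise bound \eqref{eq:pw-pointwise} controls each term, but summing over $n$ requires the frame/Bessel property of the system (the right-hand inequality in \eqref{frame}) so that $\sum_n|\langle f,g_n\rangle|^2\le C\|f\|^2$, and it is the interplay between this square-summability and the uniform smallness $K\eta$ of the individual displacements that forces the choice of $\eta=\eta(S,\L)$.
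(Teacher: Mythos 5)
There is a genuine gap, and it sits exactly at the step you yourself flagged as ``demanding care.'' Your pointwise estimate $\|e_{\l_n}-e_{\g_n}\|_{L^2(S)}\le K|\l_n-\g_n|$ is correct, but neither of the two ways you use it is valid. From $\|u_n\|\le K\eta$ for $u_n=e_{\g_n}-e_{\l_n}$ one cannot conclude $\|\sum a_nu_n\|\le K\eta\big(\sum|a_n|^2\big)^{1/2}$: the triangle inequality only yields the $\ell^1$ bound $K\eta\sum|a_n|$, and the $\ell^2$ bound is false as a general Hilbert-space statement --- take $u_n=u$ a fixed vector of norm $K\eta$ and $a_n=N^{-1/2}\1_{\{n\le N\}}$, so that $\|\sum a_nu_n\|=K\eta\sqrt{N}\to\infty$ while $\sum|a_n|^2=1$. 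The same objection defeats your Neumann-series step: the quantity $\big(\sum_n\|g_n\|^2\|u_n\|^2\big)^{1/2}$ is a Hilbert--Schmidt-type bound whose terms are merely bounded, so the sum diverges for any infinite system; and square-summability of the coefficients $\langle f,g_n\rangle$ combined with uniform smallness of the $\|u_n\|$ does not control $\|\sum_n\langle f,g_n\rangle u_n\|$, by the same parallel-vectors example. What is actually needed to run your argument (or, equivalently, the standard stability criterion \eqref{stability condition} that the paper quotes from Young) is that the \emph{difference system} be Bessel with small constant:
\[
\sum_n\big|\langle f,e_{\l_n}-e_{\g_n}\rangle\big|^2\le c(\eta)\,\|f\|^2\qquad\forall f\in L^2(S),\quad\text{with }c(\eta)\to 0\text{ as }\eta\to 0,
\]
and this cannot be extracted from the individual norms $\|u_n\|$ alone.

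This small-Bessel estimate is the theorem's entire substance, and the paper's proof shows what extra input it requires. Writing $F=\widehat{f}$, one has $\langle f,e_{\l_n}-e_{\g_n}\rangle=F(2\pi\l_n)-F(2\pi\g_n)=2\pi(\l_n-\g_n)F'(2\pi\xi_n)$ for intermediate points $\xi_n$, with $F'=\widehat{h}$ for some $h\in L^2(S)$ satisfying $\|h\|\le C\|f\|$ (here boundedness of $S$ enters). Crucially, one then uses that $\L$ is \emph{separated} --- itself a nontrivial consequence of $E(\L)$ being a Riesz basis on a bounded set, proved via the dual system in Remark \ref{bessel condition} --- so that for small $\eta$ the points $\{\xi_n\}$ are separated as well and therefore satisfy the Bessel inequality $\sum_n|\langle h,e_{\xi_n}\rangle|^2\le C\|h\|^2$. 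Separation never appears in your proposal, yet it is precisely the mechanism by which the uniform displacement bound $|\l_n-\g_n|<\eta$ converts into an operator-norm-small perturbation; your suggested tools (Cauchy--Schwarz, the dual-norm bound $\|g_n\|\le C$, the frame inequality for $\{g_n\}$) do not supply it. So while your skeleton --- perturbation operator plus Neumann series --- matches the criterion the paper invokes as a black box, the verification of that criterion is missing, and the ingredients you would need to fill it in (separation of $\L$, the mean value theorem for $F$, and Bessel's inequality for separated frequency sequences) are exactly the paper's proof.
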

\begin{proof}
It is well known (and easy to check) that if $\{f_n\}$ is a Riesz basis in a Hilbert space $H$ then there exists some constant $c$ such that every sequence $\{g_n\}\subseteq H$ which satisfies
\begin{equation}\label{stability condition}
\sum |\langle f,f_n-g_n\rangle |^2\leq c\|f\|^2\qquad \forall f\in H
\end{equation}
is also a Riesz basis %\marginpar{changed sequence to basis}
in $H$ (see \cite{Y01}).

To check this condition in our setting fix $f\in L^2(S)$ and denote its Fourier transform by $F$. Since $S$ is bounded, $F$ has a derivative on the real axis which is an image, under the Fourier transform, of some function in $L^2(S)$, say $h$. We have, $\|h\|\leq C\|f\|$, for some constant $C$  depending only on the diameter of $S$. Hence we get,%\marginpar{added 2$\pi$ in various places}
\begin{gather*}
\sum |\langle f, e_{\l_n}-e_{\gamma_n}\rangle|^2=\sum |F(2\pi\l_n)-F(2\pi\gamma_n)|^2\\
\le (2\pi\eta)^2 \sum|F'(2\pi\xi_n)|^2=(2\pi\eta)^2 \sum|\langle h, e_{\xi_n}\rangle|^2
\end{gather*}
where $\xi_n$ is some number between $\lambda_n$ and $\gamma_n$. Now,
by the first part of Remark \ref{bessel condition}, $\L$ must be
separated, and hence, if $\eta$ is sufficiently small, so will be the
sequence $\{\xi_n\}$. Using this with the second part of Remark \ref{bessel
  condition} gives that the last expression is smaller than
$C\eta^2\|h\|^2$. But $\|h\|\le C\|f\|$, for some constant $C$ depending only on the diameter of $S$. Hence, for small enough $\eta$ the condition in (\ref{stability condition}) is fulfilled.
\end{proof}

\begin{remark}
Both the theorem and its proof hold also for frames and Riesz sequences of exponentials.
\end{remark}
\begin{remark}
In the case that $S=[0,1]$ and $\L=\Z$ the theorem holds for every
constant $\eta<\frac14$, and this is sharp (this is the Kadec $\frac 14$
theorem already mentioned, see \cite{L40,K64}).
\end{remark}

\subsection{Conventions} We use $\{x\}$ to denote the fractional value
of $x$ i.e.\ the unique element in $(x+\Z)\cap [0,1)$. We use $c$ and $C$ to denote constants which
depend %\marginpar{changed depends to depend}
on the system of exponentials under considerations but do not
depend on other parameters. Their value may change from place to place
and even inside the same formula. We will number them occasionally for
clarity; numbered constants do not change their value. $C$ will be
used for constants sufficiently large and $c$ for constants
sufficiently small.

\section{the basic lemma}\label{sec:union}

%\marginpar{I added a "beginning" to the section}
In this section we prove a general lemma which is used in several places throughout our construction. This lemma describes how one can get a Riesz basis of exponentials over a set $S$, given that there exist Riesz bases over some sets related to $S$.

Fix a positive integer $N$. Given a set $S\subset [0,1]$, define
%\begin{align*}
%&A_n=\{t\in S: t+\frac{2\pi j}{N}\in S \textrm{ for exactly } n \textrm{ integer } j{'s}\}\qquad n=1,...,N\\
%&A_n^{j}=A_n\cap [\frac{2\pi j}{N}, \frac{2\pi (j+1)}{N}]\qquad n,j=1,...,N \\
%&B_n=\{t\in [0, 1]: t+\frac{j}{N}\in S \textrm{ for exactly } n \textrm{ integer } j{'s}\}\qquad n=0,1,...,N\\
%&E_n=\cup_{k=n}^{N}B_n\qquad n=0,1,...,N
%\end{align*}
\begin{align}
&A_n=\Big\{t\in\Big[0,\frac1N\Big]:t+\frac jN\in S\textrm{ for exactly $n$
  values of $j\in\{0,\dotsc,N-1\}$}\Big\}\nonumber\\
&A_{\ge n}=\bigcup_{k=n}^N A_k\label{eq:defAgen}
\end{align}
\begin{lemma}\label{lem:main}
If there exist $\L_1,\dotsc,\L_N\subseteq N\Z$ such that
the system $E({\L_n})$ is a Riesz basis in $L^2(A_{\ge n})$, then the system $E(\L)$, where
\[
\L=\bigcup_{j=1}^{N}(\L_j+j),
\]
is a Riesz basis in $L^2(S)$.%\marginpar{I made a slight change to the formulation of the lemma}
\end{lemma}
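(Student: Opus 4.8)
The plan is to \emph{diagonalize} the construction by an unfolding (Zak-type) isometry that turns the geometry of $S$ inside $[0,1]$ into a field of finite-dimensional fibres over $[0,\tfrac1N]$, reads off $E(\Lambda)$ as a system of partial discrete-Fourier frames on these fibres, and then corrects a triangular defect by a finite Neumann series. For $t\in[0,\tfrac1N]$ set $K(t)=\{k\in\{0,\dots,N-1\}:t+\tfrac kN\in S\}$, so $n(t):=|K(t)|$ is the count in the definition of $A_n$, and $t\in A_{\ge j}$ exactly when $n(t)\ge j$. The map $Zf(t)=(f(t+\tfrac kN))_{k\in K(t)}$ is an isometric isomorphism from $L^2(S)$ onto the space of square-integrable fields $t\mapsto v(t)\in\mathbb{C}^{K(t)}$, since $\|f\|_{L^2(S)}^2=\int_0^{1/N}\sum_{k\in K(t)}|f(t+\tfrac kN)|^2\,dt$. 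The role of the hypothesis $\L_j\subseteq N\Z$ is that $e_\mu$ with $\mu\in N\Z$ is $\tfrac1N$-periodic; a direct computation then gives, for $\mu\in\L_j$, the factorization $Ze_{\mu+j}(t)=e_\mu(t)\,s_j(t)$, where $\omega=e^{2\pi i/N}$ and $s_j(t)=\bigl(e^{2\pi ijt}\omega^{jk}\bigr)_{k\in K(t)}\in\mathbb{C}^{K(t)}$. Thus each exponential splits into a scalar $e_\mu(t)$ carrying the $\L_j$-information (adapted to the length-$\tfrac1N$ interval) times a fixed fibre direction $s_j(t)$.

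The linear-algebra heart of the matter is that for each $t$ the $n(t)$ vectors $s_1(t),\dots,s_{n(t)}(t)$ form a \emph{basis} of $\mathbb{C}^{K(t)}$, with condition number bounded by a constant $C(N)$ independent of $t$. Indeed $s_j(t)_k=(e^{2\pi it}\omega^k)^j$, so the matrix with these columns is, after a unitary diagonal scaling by $e^{2\pi ijt}$ on the right, a Vandermonde matrix in the \emph{distinct} nodes $\omega^k$, $k\in K(t)$; its singular values therefore do not depend on $t$, and there are only finitely many node-sets $K\subseteq\{0,\dots,N-1\}$, so the maximal condition number over all of them is a finite constant $C(N)$. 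Using this uniform conditioning, I define a bounded invertible coordinate map $\Phi\colon L^2(S)\to\mathcal{K}:=\bigoplus_{j=1}^N L^2(A_{\ge j})$ by expanding the fibre, $Zf(t)=\sum_{j=1}^{n(t)}(\Phi f)_j(t)\,s_j(t)$ with $(\Phi f)_j\in L^2(A_{\ge j})$; the norm equivalence $\|f\|^2\asymp\sum_j\|(\Phi f)_j\|^2$ is exactly the pointwise conditioning estimate integrated in $t$.

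The tempting hope is that $\Phi$ sends $e_{\mu+j}$ to $e_\mu$ in the $j$-th slot, which would finish the proof immediately since $\{\epsilon_{\mu,j}\}$ (the exponential $e_\mu$ placed in slot $j$, $\mu\in\L_j$) is a Riesz basis of $\mathcal{K}$ as a finite direct sum of the given Riesz bases. This is only \emph{triangularly} true, and that is the main obstacle. For $t\in A_{\ge j}$ the vector $s_j(t)$ is itself one of the basis vectors, so $\Phi e_{\mu+j}=\epsilon_{\mu,j}$ there; but for $t\notin A_{\ge j}$, where $n(t)<j$, one must re-expand $s_j(t)$ in $s_1(t),\dots,s_{n(t)}(t)$, and this produces leakage into slots $i<j$, supported on $A_{\ge i}\setminus A_{\ge j}=\{t:i\le n(t)<j\}$. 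Hence $\Phi e_{\mu+j}=\epsilon_{\mu,j}+R_{\mu,j}$ with $R_{\mu,j}$ landing only in strictly lower slots. The operator $T$ on $\mathcal{K}$ determined by $T\epsilon_{\mu,j}=\Phi e_{\mu+j}$ is bounded: the target system is Bessel, being a finite union of the separated sequences $\L_j+j$ with $S$ bounded (Remark~\ref{bessel condition}), and $\Phi$ is bounded. Therefore $L:=T-I$ is bounded, and by the support description $L$ carries slot $j$ into $\bigoplus_{i<j}$(slot $i$); as an $N\times N$ block operator it is strictly triangular, so $L^{N}=0$ and $T=I+L$ is invertible with $T^{-1}=\sum_{k=0}^{N-1}(-L)^k$.

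Consequently $\{\Phi e_{\mu+j}\}=\{T\epsilon_{\mu,j}\}$ is the image of a Riesz basis of $\mathcal{K}$ under a bounded invertible operator, hence a Riesz basis of $\mathcal{K}$; pulling back by the bounded invertible $\Phi^{-1}$ shows that $E(\L)$ is a Riesz basis of $L^2(S)$. I expect the two delicate points to be (i) the uniform boundedness of the fibre conditioning, which is legitimate only because $N$ is fixed and the $t$-dependence is a harmless unitary twist of a Vandermonde matrix, and (ii) recognizing that the unavoidable leakage is \emph{strictly triangular} in the finite index $j$, so that it is inverted by a finite Neumann series rather than requiring a smallness (perturbation) hypothesis.
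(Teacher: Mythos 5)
Your proof is correct, and while it runs on the same raw ingredients as the paper's, it is organized along a genuinely different route. The shared core: both arguments fold $S$ onto $[0,\frac1N]$ (your unfolding isometry $Z$ is exactly the paper's passage from $f$ to the functions $h_j$), both exploit the $\frac1N$-periodicity of $e_\mu$ for $\mu\in N\Z$ to split each $e_{\mu+j}$ into a scalar exponential times a fixed fibre direction, and both rest on the uniform invertibility of the $n\times n$ Vandermonde minors of $(e^{2\pi ijk/N})_{k,j}$, uniform for the same reason you give (finitely many index sets $K\subseteq\{0,\dotsc,N-1\}$, and the $t$-dependence is a unitary twist). Where you diverge is in how the contamination between multiplicity levels is tamed. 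The paper invokes Lemma~\ref{rb is rs and frame} and runs two separate inequality cascades --- a frame bound of the form $c\|f_n\|^2-\sum_{k<n}\|f_k\|^2$ and a Riesz-sequence bound with errors coming from the levels $j>n$ --- each absorbed by summing against exponentially decreasing weights $\delta_n$. You instead package everything as a single similarity: the bounded invertible coordinate map $\Phi$ onto $\bigoplus_{j=1}^N L^2(A_{\ge j})$, followed by the observation that $E(\Lambda)$ corresponds to $(I+L)$ applied to the obvious direct-sum Riesz basis, where $L$ is strictly block-triangular in the finite index $j$, so $L^N=0$ and $I+L$ is inverted by a finite Neumann series. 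This yields the frame and Riesz-sequence bounds in one stroke, makes the constants explicit via $\|(I+L)^{-1}\|\le\sum_{k=0}^{N-1}\|L\|^k$, and isolates conceptually why no smallness or perturbation hypothesis is needed: the leakage is not merely controllable, it is nilpotent. A few points you pass over quickly do hold and deserve a sentence each in a write-up: surjectivity of $\Phi$ follows from the synthesis direction of the same pointwise conditioning estimate; the statement that $L$ maps the \emph{whole} slot $j$ (not just the vectors $\epsilon_{\mu,j}$) into lower slots uses the completeness of $E(\Lambda_j)$ in $L^2(A_{\ge j})$ together with the boundedness of $L$; and the sets $\Lambda_j+j\subseteq N\Z+j$ are pairwise disjoint, so your indexed family really is $E(\Lambda)$ without repetitions. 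Your boundedness of $T$ via the Bessel property (Remark~\ref{bessel condition}, using $\Lambda\subseteq\Z$ and $S\subseteq[0,1]$) is the same upper-bound input the paper uses. In sum: the paper's route is lower-tech and needs no vector-valued setup; yours buys a structural explanation and exhibits a single explicit bounded invertible operator carrying an orthogonal-type basis to $E(\Lambda)$.
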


\begin{proof}%\marginpar{changed lemma to Lemma}
We will use lemma \ref{rb is rs and frame} and show that $E(\L)$ is
both a frame and a Riesz sequence. Throughout the proof we will use the notation
\[
e(t)=e_1(t)=e^{2\pi i t}.
\]
Clearly, it is equivalent to prove the lemma under the assumptions
that
\[
\L_n\subset N\Z+n\qquad \L=\bigcup_{j=1}^N\L_j
\]
(but still requiring that $\L_n$ is a Riesz basis for $A_{\ge n}$ ---
recall that the property of being a Riesz basis is invariant to translations) which will make the notations a little shorter.

\medskip
\noindent\textbf{Frame.} To show that $E(\L)$ is a frame in
$L^2(S)$ we need to show that for any $f\in L^2(S)$%\marginpar{changed c to b to avoid confusion later}
\[
\sum_{\l\in\L}|\langle f, e_\l\rangle|^2>c_1||f||^2
\]
(the right inequality in the definition of a frame, (\ref{frame}), is
satisfied because $S\subset[0,1]$ and $\L\subset\Z$). For $n\in\{1,\dotsc,N\}$, denote by $f_n$ the restriction of $f$ to
\begin{equation}\label{eq:defAn}
B_n=\Big\{t\in S: t+\frac{j}{N}\in S \textrm{ for exactly } n
\textrm{ integer } j{'s}\Big\}.
\end{equation}
($A_n$ is the ``folding'' of $B_n$ to $[0,\frac 1N]$ i.e.\ cutting
it to $N$ pieces, translating each one to $[0,\frac 1N]$ and taking a union).
%, by $f_n^j$ the restriction of $f$ to $A_n^j$ and by $\tilde{f}_n^j$ the function supported on $[0, \frac{2\pi }{N}]$ who's translate by ${2\pi j}/{N}$ equals to $f_n^j$.
It is enough to show that for every $n=1,\dotsc,N$ we have
\begin{equation}\label{what we need for frame}
\sum_{\l\in\L}|\langle f, e_\l\rangle|^2\geq c\|f_n\|^2-\sum_{k=1}^{n-1}\|f_k\|^2,
\end{equation}
where $c$ is a positive constant, not depending on
$f$. %\marginpar{removed the mention of C}
Indeed, the inequalities in (\ref{what we need for frame}) imply
that for any sequence of positive numbers $\{\delta_n\}_{n=1}^N$ with
$\sum\delta_n=1$ we have %\marginpar{made the $c_2$ just $c$-the 2 seemed to confuse me when reading-i hope this is ok}
\begin{align*}
\sum_{\l\in\L}|\langle f, e_{\l}\rangle|^2&=
\sum_{n=1}^N\delta_n\sum_{\l\in\L}|\langle f,
e_\l\rangle|^2\\
&\stackrel{\textrm{(\ref{what we need for frame})}}{\geq}
 \sum_{n=1}^N\delta_n\Big(c\|f_n\|^2-\sum_{k=1}^{n-1}\|f_k\|^2\Big)= \sum_{n=1}^N\Big(c\delta_n-\sum_{k=n+1}^{N}\delta_k\Big)\|f_n\|^2.
\end{align*}
Denote the last constant by $c_2$ for clarity. We get that, if the sequence $\{\delta_n\}$ satisfies%\marginpar{still confuses me the way you choose c1 here, but at least i understand why we keep choosing it differently}
\[
\delta_n>\frac 2{c_2}\sum_{k=n+1}^{N}\delta_k,\qquad\forall n\in\{1,\dotsc,N\}
\]%\marginpar{changed the requirements for $c_1$}
(essentially it needs to decrease exponentially), then for $c_1=\frac
12 c_2\min\delta_n$ we get that%\marginpar{added 2 to the requirement from $\delta_n$, then used the constant b}
\[
\sum_{\l\in\L}|\langle f, e_\l\rangle|^2\geq c_1\sum_{n=1}^N\|f_n\|^2=c_1\|f\|^2.
\]
as needed.

Hence we need to show (\ref{what we need for frame}). Fix therefore
some $n\in\{1,\dotsc,N\}$ until the end of the proof. Now, for any $x,y\in\CC$, $|x+y|^2\ge \frac12
|x|^2-|y|^2$. So,%\marginpar{changed a hence to a so}
\[
|\langle f,e_\l\rangle|^2\ge
\frac12\Big|\Big\langle\sum_{k=n}^Nf_k,e_\l\Big\rangle\Big|^2 -
\Big|\Big\langle\sum_{k=1}^{n-1}f_k,e_\l\Big\rangle\Big|^2.
\]
For brevity define $f_{\ge n}=\sum_{k=n}^Nf_k$. Summing over $\l\in\L$ gives%\marginpar{changed location of $\L\subset\Z$}
\begin{align*}
\sum_{\l\in\L}|\langle f, e_\l\rangle|^2
&\ge \frac{1}{2}\sum_{\l\in\L}|\langle f_{\ge n},e_\l\rangle|^2-
  \sum_{\l\in\L}\Big|\Big\langle \sum_{k=1}^{n-1}f_k, e_\l\Big\rangle\Big|^2\\
&\stackrel{\textrm{\clap{$(*)$}}}{\geq}
  \frac{1}{2}\sum_{\l\in\L}|\langle f_{\ge n}, e_\l\rangle|^2-
  \Big\|\sum_{k=1}^{n-1}f_k\Big\|^2\\
&\stackrel{\textrm{\clap{$(**)$}}}{=}
  \frac{1}{2}\sum_{\l\in\L}|\langle f_{\ge n}, e_\l\rangle|^2-
  \sum_{k=1}^{n-1}\|f_k\|^2
\end{align*}
where $(*)$ is because $\L\subset\Z$ and $(**)$ since $f_k$ have disjoint supports. Hence, to obtain (\ref{what we need for frame}) it remains to show that
\begin{equation}\label{more of what we need for frame}
\sum_{\l\in\L}|\langle f_{\ge n}, e_\l\rangle|^2\geq   c\|f_n\|^2
\end{equation}%\marginpar{added "Fix n" at the end of this paragraph}
where $c$ is a positive constant not depending on $f$. %Fix therefore some $n\in\{1,...,N\}$.

Recall that $\L=\cup\L_j$ and $\L_j\subseteq N\Z+j$. For any $\l\in N\Z+j$ we have
\begin{align}
\langle f_{\ge n},e_{\l}\rangle
  &=\int_0^1 f_{\ge n}(t)\overline{e_{\l}}(t)\,dt
  =\int_0^{1/N}\sum_{l=0}^{N-1}f_{\ge n}\Big(t+\frac
  lN\Big)e_{-\lambda}\Big(t+\frac lN\Big)\,dt \nonumber\\
&=\int_0^{1/N}h_j(t)e(-\l t)
  =\langle h_j,e_\lambda\rangle.\label{eq:dumb}
\end{align}
where%\marginpar{I changed the order of the first half page here, and removed some stuff - I really think that here the extensive detail made it more confusing. in the def of h i moved the $-1$ - i think this is how you prefer it}
\[
h_j(t)=\1_{A_{\ge n}}(t)\cdot \sum_{l=0}^{N-1}f_{\ge n}\Big(t+\frac lN\Big)e\Big(-\frac {jl}N\Big).
\]
Fix $j\leq n$. Since $\L_j$ is a Riesz basis for $A_{\ge j}$ and since $h_j$
is supported on $A_{\ge n}\subset A_{\ge j}$ we have%\marginpar{I changed the location of j}
\begin{equation}\label{eq:Riesz}
\sum_{\l\in\L_j}|\langle f_{\ge n},e_{\l}\rangle|^2\stackrel{\textrm{(\ref{eq:dumb})}}{=}\sum_{\l\in\L_j}|\langle
h_j,e_\l\rangle|^2
\ge c ||h_j||^2
\end{equation}
where $c$ is the Riesz constant of $\L_j$. %\marginpar{changed the
                                %line with $\L_n\subseteq \L_j$}
Summing over $j$ gives%\marginpar{changed $B_n$ to $A_n$}
\begin{align}\label{eq:Fh}
\sum_{\l\in\L}|\langle f_{\ge n},e_\l\rangle|^2&\ge
\sum_{j=1}^n\sum_{\l\in\L_j}|\langle f_{\ge n},e_\l\rangle|^2\ge\nonumber\\
%\sum_{j=1}^n\sum_{\l\in\L_n+j}|\langle f_{\ge n},e_\l\rangle|^2
&\stackrel{\textrm{(\ref{eq:Riesz})}}{\ge}
c\sum_{j=1}^n||h_j||^2\ge c\sum_{j=1}^n||h_j\cdot\1_{A_n}||^2.
\end{align}
%where the last inequality is due to the fact that $e_j$ is unimodular.

%\marginpar{I changed most of this paragraph, I guess it needs more editing}Thus we need only compare $\sum||e_{j}h_j\cdot 1\!\!1_{B_n}||^2$ and $||f_n||^2$. However,
For every particular $t\in A_n$ the values of $\{h_j(t)\}_j$ are given by
applying the $n\times N$ matrix $L=\{e(-jl/N)\}_{j,l}$ to the vector
$\{f_{\ge n}(t+l/N)\}_l$. Now, $t\in A_n$ so exactly $n$ different values of this vector are non-zero.
Considering only these values we may think of $L$ as an
$n\times n$
Vandermonde matrix which is invertible because the numbers $e(-l/N)$
are different. Let $C$ be a bound for the norm of the inverse over all such $n\times n$ sub-matrices of $L$. We get
\[
\sum_{j=1}^n |h_j(t)|^2 \ge \frac 1C\sum_{l=0}^{N-1} \Big|f_{\ge n}\Big(t+\frac
lN\Big)\Big|^2
\]
which we integrate over $t\in A_n$ to get
\[
\sum_{j=1}^n ||h_j\cdot\1_{A_n}||^2\ge c\sum_{l=0}^{N-1} \int_{A_n}\Big|f_{\ge n}\Big(t+\frac
lN\Big)\Big|^2dt=c||f_{n}||^2.
\]
%(where the last inequality is due to the fact that the $f_n$ have
%disjoint supports).
With (\ref{eq:Fh}) we get (\ref{more of what we need for frame}),
which in turn gives (\ref{what we need for frame}) and finally that
$\L$ is a frame.

\medskip
\noindent\textbf{Riesz sequence.} We now show that $E(\L)$ is a Riesz
sequence in $L^2(S)$, i.e.\ that for any sequence $a_\l\in l^2(\L)$,%\marginpar{changed L to l and c to b}
\[
\Big\Vert\sum_{\l\in\L}a_\l e_\l\Big\Vert^2\ge
c\sum_{\l\in\L}|a_\l|^2
\]
(again, the other inequality in (\ref{riesz sequence}) follows from
$S\subset[0,1]$ and $\L\subset\Z$).
% Let $\{a_{\l}\}\in l^2(\L)$.
As in the first part, it is enough to show that for every $n=1,\dotsc,N$ we have
\begin{equation}\label{what we need for r-s}
\int_{S}\Big|\sum_{\l\in\L}a_{\l} e_\l\Big|^2\geq c\sum_{\l\in\L_n}|a_{\l}|^2-\sum_{j=n+1}^{N}\sum_{\l\in\L_j}|a_{\l}|^2.
\end{equation}
To this end choose $n\in\{1,\dotsc,N\}$. We have,%\marginpar{changed some k's to j's}
\begin{align*}
\int_{S}\Big|\sum_{\l\in\L}a_{\l} e_\l\Big|^2dt&\geq
\frac{1}{2}\int_{S}\Big|\sum_{j=1}^n\sum_{\l\in\L_j}a_{\l} e_\l\Big|^2-\int_{S}\Big|\sum_{j=n+1}^N\sum_{\l\in\L_j}a_{\l} e_\l\Big|^2\\
&\ge\frac{1}{2}\int_{S}\Big|\sum_{j=1}^n\sum_{\l\in\L_j}a_{\l} e_\l\Big|^2-\sum_{j=n+1}^{N}\sum_{\l\in\L_j}|a_{\l}|^2
\end{align*}
where the second inequality is due to $S\subset[0,1]$ and $\L\subset\Z$. Denote for
brevity
\[
f=\1_S\cdot \sum_{j=1}^n \sum_{\l\in\L_j}a_\l e_\l
\]
and get that to prove (\ref{what we need for r-s}) it remains to show that
\begin{equation}\label{more we need for r-s}
\int_{S}|f(t)|^2\,dt\geq c\sum_{\l\in\L_n}|a_{\l}|^2.
\end{equation}
%Since $\int_S\ge\int_{B_{\ge n}}$ (recall the definition of $B_{\ge n}$,
%(\ref{eq:defAn})), reduce the problem further to showing
%\begin{equation}\label{more we need for r-s}
%\int_{B_{\ge n}}|f(t)|^2\,dt\geq c\sum_{\l\in\L_n+n}|a_{\l}|^2.
%\end{equation}
As in the previous case, we need to translate the problem to $A_{\ge n}$ in
order to use the assumption of the lemma. %\marginpar{I changed the
                                %notation $e(t)$ to $e_1(t)$}
Fix therefore some $t\in A_{\ge n}$ and some $l\in\{0,\dotsc,N-1\}$ such
that $t+l/N\in S$ and write
\begin{align*}
f\Big(t+\frac lN\Big) &=
\sum_{j=1}^n\sum_{\l\in\L_j} a_{\l} e_{\l}\Big(t+\frac lN\Big)=
\sum_{j=1}^n\sum_{\l\in\L_j}a_{\l} e\Big(\l t+\frac
{\l l}N\Big)=\\
&=\sum_{j=1}^n e(jl/N) \sum_{\l\in\L_j}a_{\l} e_\l(t)
\end{align*}
where the last equality is because $\L_j\subset N\Z+j$. Again we see that the vector $\{f(t+l/N)\}_l$ (where $l$ runs only
over the values for which $t+l/N\in S$, say there are $k$ such values)
is given by applying the $k\times n$ %\marginpar{I changed $n\times k$ to $k\times n$ and $\{e(jl/N)\}_{j,l}$ to $\{e(jl/N)\}_{l,j}$}
matrix $\{e(jl/N)\}_{l,j}$ to the vector
$\big\{\sum_{\l\in\L_j}a_{\l} e_\l(t)\big\}_j$. Again this
matrix has full rank because $k\ge n$ and any $n\times n$ minor is an invertible
Vandermonde matrix. Set $C$ to be a bound for the norms of the
inverses over all these minors and get, for all $t\in A_{\ge n}$,
\begin{equation}\label{eq:onet}
\sum_{l=0}^{N-1}\Big|f\Big(t+\frac lN\Big)\Big|^2
\ge \frac 1C\sum_{j=1}^n\Big|\sum_{\l\in\L_j}a_{\l} e_\l(t)\Big|^2
\ge \frac 1C\Big|\sum_{\l\in\L_n}a_{\l} e_\l(t)\Big|^2.
\end{equation}
Integrate over $t$ and get
\begin{align*}
\int_S|f(t)|^2\,dt
& \ge \int_{B_{\ge n}}|f(t)|^2\,dt
=\int_{A_{\ge n}}\sum_{l=0}^{N-1}\Big|f\Big(t+\frac lN\Big)\Big|^2dt\\
& \stackrel{\textrm{\clap{(\ref{eq:onet})}}}{\ge}
\frac 1C\int_{A_{\ge n}}\Big|\sum_{\l\in\L_n}a_{\l}
e_\l(t)\Big|^2.
\end{align*}
%% \[
%% \sum_{l=1}^n\Big|\sum_{\l\in\L_{\le n}}a_\l e_\l\Big(t+\frac
%%   lN\Big)\Big|^2
%% \]
%% Write now
%% \begin{align*}
%% \int_{S}\Big|\sum_{j=1}^n&\sum_{\l\in\L_j+j}a_{\l} e_\l\Big|^2\geq \int_{A_n}\Big|\sum_{j=1}^n\sum_{\l\in\L_j+j}a_{\l} e_\l\Big|^2dt\\
%% &=\sum_{l=0}^{N-1}\int_{A^l_n}|\sum_{j=0}^{n-1}\sum_{\l\in\L_{(j+1)}}a_{\l} e^{i(\l+j) t}|^2dt\\
%% &=\sum_{l=0}^{N-1}\int_{A^l_n-\frac{2\pi l}{N}}|\sum_{j=0}^{n-1}\sum_{\l\in \L_{(j+1)}}a_{\l} e^{i(\l+j) (t+\frac{2\pi l}{N})}|^2dt.
%% \end{align*}
%% Since $\L_j\subseteq N\Z$ for every $j=0,...,N-1$, this is equal to
%% \[
%% \sum_{l=0}^{N-1}\int_{A^l_n-\frac{2\pi l}{N}}|\sum_{j=0}^{n-1}e^{i\frac{2\pi jl}{N}}\sum_{\l\in \L_{(j+1)}}a_{\l} e^{i(\l+j)t}|^2dt.
%% \]
%% Note that $\cup_{l=0}^{N-1}(A^l_n-2\pi l/N)=E_n$ and moreover, that every $t\in E_n$ belongs to exactly $n$ sets of the form $A^l_n-2\pi l/N$.
%% We use once more the invertibility of van-der-monde matrices to find that the last expression is bigger then some constant multiplying
%% \[
%% \sum_{j=0}^{n-1}\int_{E_n}|\sum_{\l\in \L_{(j+1)}}a_{\l} e^{i(\l+j)t}|^2dt\geq
%% \]
%% \[
%% \int_{E_n}|\sum_{\l\in\L_n}a_{\l} e^{i\l t}|^2dt.
%% \]
Since $E({\L_n})$ is a Riesz basis over $A_{\ge n}$, this is $\ge
c\sum_{\l\in\L_n}|a_{\l}|^2$. This shows
(\ref{more we need for r-s}), hence (\ref{what we need for r-s}), and completes the proof.
\end{proof}

Let us remark that a similar approach of dissection and translation was
used in \cite{OU09}.

%\begin{remark}
%Let $S_1$ and $S_2$ be two sets of positive measure. Assume that the process described in Lemma 1, using the partition of $[0,2\pi]$ into $N$ intervals, yields %two sequences $E(\L_1)$ and $E(\L_2)$  which are Riesz bases over $S_1$ and $S_2$ respectively. If $S_1$ is contained in an $N$-commeasurable set of measure %$k/N$ and $S_2$ contains an $N$-commeasurable set of measure $k/N$, then $\L_1\subseteq \L_2$.
%\end{remark}
%\begin{proof}
%Indeed, in this case we have for the set $S_1$
%\[
%E_j=\emptyset \qquad \forall k<j\leq N
%\]
%So
%\[
%\L_1\subseteq \cup_{j=1}^{k}(N\Z+j-1).
%\]
%On the other hand, for $S_2$ we have
%\[
%[0,\frac{2\pi}{N}]\subseteq E_j\qquad \forall 1\leq j\leq K
%\]
%So
%\[
%\cup_{j=1}^{k}(N\Z+j-1)\subseteq \L_2.
%\]
%\end{proof}

\section{Single intervals}
In this section we use lemma \ref{lem:main} to obtain some corollaries regarding Riesz bases of exponentials over single intervals. Our construction for Riesz bases over finite unions of intervals will be done in much the same way. We end this section with Claim \ref{generic case}, which is a simplified version of our main result: it states that our main result holds in the ``generic'' case.

The following lemma was proved in \cite{S95} using Avdonin's
theorem. As promised, we give an elementary proof.
\begin{lemma}\label{one interval}
Let $S\subseteq [0,1]$ be an interval.
Then there exists a sequence $\L\subseteq\Z$ such that $E(\L)$ is a Riesz basis for $L^2(S)$.
\end{lemma}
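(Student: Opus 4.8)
By translation invariance of the Riesz basis property I may assume $S=[0,\ell]$ with $\ell=|S|\in(0,1]$; the case $\ell=1$ is immediate since $E(\Z)$ is an orthonormal basis of $L^2([0,1])$. The tool is Lemma~\ref{lem:main}, applied to $S$ with a well-chosen integer $N$. First I compute the folded sets. Writing $N\ell=m+\theta$ with $m=\lfloor N\ell\rfloor$ and $\theta=\{N\ell\}\in[0,1)$, and setting $s=Nt$, a direct count of how many of $t+\tfrac jN$, $j=0,\dots,N-1$, lie in $[0,\ell]$ shows that this number is $m+1$ for $s\in(0,\theta)$ and $m$ for $s\in(\theta,1)$. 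Hence
\[
A_{\ge n}=[0,\tfrac1N]\ (1\le n\le m),\qquad A_{\ge m+1}=(0,\tfrac\theta N),\qquad A_{\ge n}=\emptyset\ (n\ge m+2).
\]
The point of placing the left endpoint at $0$ is that $A_{\ge m+1}$ is a \emph{single} interval (of length $\theta/N$), not a union of two, which is what will let me recurse inside the class of intervals.

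Next I supply the bases demanded by Lemma~\ref{lem:main}. For $n\le m$ I take $\L_n=N\Z$: since $\{e_{Nk}\}_{k\in\Z}$ is a rescaled orthonormal basis of $L^2$ over any interval of length $1/N$, it is a Riesz basis of $L^2(A_{\ge n})=L^2([0,\tfrac1N])$. For $n\ge m+2$ there is nothing to do. The only genuine piece is $n=m+1$: I need a Riesz basis of $L^2((0,\tfrac\theta N))$ with frequencies in $N\Z$, and after rescaling the variable by $N$ this is exactly the single-interval problem for the \emph{shorter} interval $[0,\{N\ell\}]$. Lemma~\ref{lem:main} then assembles everything into a Riesz basis $E(\L)$ of $L^2(S)$, with $\L=\bigcup_{n=1}^{m}(N\Z+n)\cup\big(N\L^{(1)}+(m+1)\big)$, where $\L^{(1)}\subseteq\Z$ solves the problem for length $\{N\ell\}$. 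This already settles the rational case with no recursion: if $\ell=p/q$, choosing $N=q$ gives $\theta=0$, the remainder is empty, and $\L=\bigcup_{n=1}^{p}(q\Z+n)$ works.

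For irrational $\ell$ I iterate, at each stage taking $N=\lceil1/\ell\rceil$, which forces $m=1$ and a strictly shorter remainder $\{N\ell\}=N\ell-1<\ell$. Because $\{N\ell\}$ stays irrational the descent never terminates, but every integer is assigned at some finite stage, so the nested union is a well-defined set $\L\subseteq\Z$; an easy induction on the density identity $(m+\{N\ell\})/N=\ell$ shows $\L$ has density $\ell$, as Landau's theorem requires.

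\textbf{The main obstacle} is precisely this infinite descent: I must show that the frame and Riesz-sequence bounds survive the limit, i.e.\ that the accumulated constants stay uniformly positive and finite. Each application of Lemma~\ref{lem:main} multiplies the condition number by a factor controlled by the conditioning of the Vandermonde minors on $N$th roots of unity; in the greedy step the two active nodes are the \emph{adjacent} roots $1$ and $e^{-2\pi i/N}$, whose separation is of order $1/N$, so a careless recursion lets these factors compound. Keeping $N$ bounded is not available either, since descending from a short interval forces $N\gtrsim1/\ell$. Controlling the product of these growing condition numbers is therefore the heart of the proof. I would attack it quantitatively rather than by pure iteration: track the bilinear form of $E(\L)$ directly and show that the contribution of the $k$-th stage acts on functions supported on the geometrically shrinking remainders, so that the stagewise estimates are summable and telescope to uniform bounds, rather than merely multiplying. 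Everything outside this uniform estimate is the routine bookkeeping of Lemma~\ref{lem:main}.
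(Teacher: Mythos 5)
Your folding computation, the reduction to Lemma \ref{lem:main}, and the rational case are all correct and match the paper. But for irrational $\ell$ your proof has a genuine gap, and it is exactly the one you flag yourself: the infinite descent is never closed. Each application of Lemma \ref{lem:main} degrades the lower frame and Riesz-sequence constants multiplicatively --- through the weights $\delta_n$ and through the inverse norms of the Vandermonde minors of $\{e(-jl/N)\}$ --- and in your greedy step ($m=1$, $N_k=\lceil 1/\ell_k\rceil$) the relevant $2\times 2$ minor has the adjacent nodes $1$ and $e(-1/N_k)$, so its inverse has norm of order $N_k$, with $N_k$ unbounded along the descent. Thus the stagewise losses compound as an infinite product that tends to $0$, and your final paragraph ("track the bilinear form directly and show the stagewise estimates are summable and telescope") is a research plan, not an argument: no estimate is given, and no mechanism is identified that would convert multiplicative losses into summable ones. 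Note also that the choice $N=\lceil 1/\ell\rceil$ contains no device for making the remainder $\{N\ell\}$ small, so for irrational $\ell$ the descent genuinely never terminates; and the Kadec-type example in the introduction shows that assembling exponential Riesz bases without quantitative control can actually fail, so the gap cannot be waved away by soft limiting arguments.

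The paper closes the loop with one idea your proposal is missing: a \emph{base case} supplied by Paley--Wiener stability, so that Lemma \ref{lem:main} is applied only \emph{once} and no constants compound. Let $\eta$ be a stability constant for $S=[0,1]$ and the basis $\Z$. For any interval $[0,\theta]$ with $\theta\le\eta$, rounding the orthogonal basis frequencies $(1/\theta)\Z$ to the nearest integers perturbs each frequency, after rescaling to $[0,1]$, by at most $\theta\le\eta$, so Paley--Wiener yields an integer Riesz basis for $L^2[0,\theta]$ (this is Claim \ref{using stability claim}). Now instead of taking $N$ greedily, choose $N$ so that $\{Nb\}\le\eta$: trivial for rational $b$, and available for irrational $b$ by density of the fractional parts $\{Nb\}$. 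Then in your own list of folded sets one takes $\L_n=N\Z$ for the full cells $[0,1/N]$ and the stability-produced basis (rescaled into $N\Z$) for the single short leftover cell $[0,\{Nb\}/N]$, and a single application of Lemma \ref{lem:main} finishes the proof. In short: you correctly identified the heart of the matter but left it open, whereas the paper's choice of $N$ adapted to the stability radius $\eta$ eliminates the recursion altogether.
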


Before starting with the proof of lemma \ref{one interval}, let us do
a special case.%We start with the following observation.
\begin{claim}\label{using stability claim} %\marginpar{added "stability"}
Let $\eta$ be a constant satisfying the conditions of the Paley-Wiener stability
theorem for $S=[0,1]$ and the Riesz basis $\Z$. There exists a sequence $\L\subseteq \Z$ such that $E(\L)$ is a Riesz basis for $L^2[0,\eta]$.
\end{claim}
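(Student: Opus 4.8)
The plan is to reduce the statement to a small perturbation of the integers via a scaling change of variables, and then to apply the Paley--Wiener stability theorem directly. First I would record the scaling principle: the unitary map $U\colon L^2[0,\eta]\to L^2[0,1]$ given by $(Uf)(s)=\sqrt{\eta}\,f(\eta s)$ sends the exponential $e_\l$ (restricted to $[0,\eta]$) to a scalar multiple of $e_{\eta\l}$ (restricted to $[0,1]$). Since a unitary map carries Riesz bases to Riesz bases, $E(\L)$ is a Riesz basis for $L^2[0,\eta]$ if and only if $E(\eta\L)=\{e_{\eta\l}\}_{\l\in\L}$ is a Riesz basis for $L^2[0,1]$. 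Thus it suffices to produce a set $\L\subseteq\Z$ for which $\eta\L$ is a Riesz basis for $L^2[0,1]$.

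Next I would construct $\L$ as a perturbation of $\Z$. For each $n\in\Z$ let $\l_n$ be an integer nearest to $n/\eta$, so that $|\l_n-n/\eta|\le\frac12$, and set $\L=\{\l_n:n\in\Z\}$. By hypothesis $E(\Z)$ is an orthonormal (hence Riesz) basis for $L^2[0,1]$ with stability constant $\eta$, so I want the sequence $\{\eta\l_n\}$ to be an admissible perturbation of $\{n\}$ in the sense of the Paley--Wiener theorem. This holds term-by-term, since
\[
|\,n-\eta\l_n\,|=\eta\,|\,n/\eta-\l_n\,|\le \eta/2<\eta \qquad\forall n\in\Z.
\]
Paley--Wiener then gives that $E(\eta\L)$ is a Riesz basis for $L^2[0,1]$, and the scaling principle of the previous paragraph converts this into the desired conclusion for $L^2[0,\eta]$.

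The one point that needs genuine care --- and the only place the size of $\eta$ enters --- is that the integers $\l_n$ must be pairwise distinct, so that $\L$ is genuinely indexed bijectively by $n$ and $E(\eta\L)$ is matched term-by-term with $E(\Z)$ in the perturbation hypothesis (were two indices to collide, $E(\eta\L)$ could not be complete). Here I would use that $\eta$ satisfies the Kadec condition, so that $\eta<\frac14$ and $1/\eta>4$; then
\[
\l_{n+1}-\l_n \ge \frac{n+1}{\eta}-\frac12-\Big(\frac n\eta+\frac12\Big)=\frac1\eta-1>3>0,
\]
so the $\l_n$ are strictly increasing and in particular distinct. With distinctness in hand the matching $n\leftrightarrow\l_n$ is a bijection onto $\L$, the bound $|n-\eta\l_n|<\eta$ is exactly the Paley--Wiener hypothesis, and the proof is complete.
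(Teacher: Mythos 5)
Your proof is correct and follows essentially the same route as the paper's: rescale so that the problem on $[0,\eta]$ becomes a perturbation of the orthonormal basis $\Z$ on $[0,1]$ (equivalently, of $(1/\eta)\Z$ on $[0,\eta]$), then round each $n/\eta$ to a nearest integer, a perturbation of size at most $\eta/2<\eta$ admissible for Paley--Wiener. Your additional check that the $\l_n$ are distinct is a detail the paper leaves implicit rather than a genuinely different argument.
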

\begin{proof}
Applying an appropriate transformation from $L^2[0,1]$ to
$L^2[0,\eta]$, we find that the system $E((1/\eta)\Z)$ is a Riesz
basis (in fact, an orthogonal basis) over $[0,\eta]$. Moreover, the Paley-Wiener stability theorem implies that if $\L=\{\l_n\}$ satisfies $|n/\eta -\l_n|\leq 1$ then $E(\L)$ is also a Riesz basis over $[0,\eta]$. This gives the claim.
\end{proof}

\begin{proof}[Proof of lemma \ref{one interval}]
We may assume that $S=[0,b]$.  Fix $\eta$ as in claim (\ref{using
  stability claim}). First, we note that there exists a number $N$ such that
$\{Nb\}\le \eta$ (recall that $\{\cdot\}$ denotes the fractional value). %0\leq b-\frac{k}{N}\leq\frac{\eta}{N}.
Indeed, this is clear if $b$ is rational and follows from the density of $\{Nb\}_{N\in\N}$ if $b$ is not rational.

Recall the definition of the sets $A_{\ge n}$, (\ref{eq:defAgen}). %Consider the partition of $[0,1]$ into the intervals $[0,j/N]$, $0\leq j<N$.
For our $S$ and $N$, the sets $A_{\geq n}$ are simply %\marginpar{changed a bN to an Nb}
\begin{align*}
A_{\geq n} &= \Big[0,\frac{1}{N}\Big]&&  1\leq n\leq \lfloor Nb\rfloor\\
A_{\geq {\lfloor Nb\rfloor+1}}&=\bigg[0,\frac{\{Nb\}}{N}\bigg]\\
A_{\geq n}&=\emptyset&&  \lfloor Nb\rfloor +2\leq n\leq N.
\end{align*}
To apply lemma \ref{lem:main} we need to demonstrate Riesz bases in
$N\Z$ for these sets. For $[0,1/N]$ we take $N\Z$ itself, and for
$[0,\{bN\}/N]$ we can apply claim \ref{using stability claim} (after
scaling by $N$) since $\{bN\}<\eta$. Hence lemma \ref{lem:main}
applies and we are done.
\end{proof}

In particular, lemma \ref{one interval} implies that for ``most'' of the sets $S$, which are finite unions of intervals, there exists a Riesz basis of exponentials. We add a short proof for this claim, as it is simpler than the proof of our general result:

\begin{claim}\label{generic case}
Let $S\subset [0,1]$ be a finite union of intervals $S=\cup_{j=1}^L[a_j, b_j]$. If the numbers $\{a_1,\dotsc,a_L,b_1,\dotsc,b_L,1\}$ are linearly independent over the rationals then there exists a sequence $\L\subset\Z$ such that $E(\L)$ is a Riesz basis over $S$.
\end{claim}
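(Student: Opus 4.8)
The plan is to invoke Lemma~\ref{lem:main}, so the task reduces to understanding the sets $A_{\ge n}$ for this $S$ and exhibiting a Riesz basis inside $N\Z$ for each nonempty one. The key structural point is that the linear independence hypothesis forces the ``folding'' to $[0,1/N]$ to be generic. Fix a large integer $N$ (to be chosen). Each interval $[a_j,b_j]$, when cut at the points $\frac1N,\frac2N,\dotsc,\frac{N-1}{N}$ and folded, contributes intervals inside $[0,1/N]$ whose endpoints are of the form $\{Na_j\}/N$ and $\{Nb_j\}/N$ (together with possibly a full copy of $[0,1/N]$ coming from whole pieces). The crucial observation is that $A_{\ge n}$ is exactly the set of points in $[0,1/N]$ covered at least $n$ times by these folded intervals, so the $A_{\ge n}$ are nested and each is a finite union of intervals whose endpoints lie in the finite set $\{\,\{Na_j\}/N,\ \{Nb_j\}/N,\ 0,\ 1/N\,\}$.

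First I would choose $N$ so that all these folded endpoints are pairwise well separated and, more importantly, so that each folded sub-interval of $[0,1/N]$ either has length comparable to $1/N$ (``long'') or is controllably short. The role of linear independence is precisely here: since $\{a_j,b_j,1\}$ are rationally independent, the vector $(\{Na_1\},\dotsc,\{Nb_L\})$ is equidistributed in $[0,1)^{2L}$ as $N$ ranges over $\N$ (Weyl), so I can select an $N$ making every relevant fractional part land in a prescribed small target window. In particular I can arrange that each $A_{\ge n}$ is, after scaling by $N$, a single interval, or a union of intervals each of length at most $\eta$, where $\eta$ is the Paley--Wiener stability constant from Claim~\ref{using stability claim}.

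Next, for each nonempty $A_{\ge n}$ I must produce $\L_n\subseteq N\Z$ with $E(\L_n)$ a Riesz basis for $L^2(A_{\ge n})$. Scaling by $N$ turns $N\Z$ into $\Z$ and turns $A_{\ge n}$ into a subset of $[0,1]$, so this is exactly the single-interval problem solved by Lemma~\ref{one interval} when $A_{\ge n}$ is one interval. If the genericity choice collapses each $A_{\ge n}$ to a single interval, I simply apply Lemma~\ref{one interval} $N$ times, assemble $\L=\bigcup_{j}(\L_j+j)$, and conclude by Lemma~\ref{lem:main}. The measures automatically match: $\sum_n |A_{\ge n}| = |S|/N$ by the folding identity, which is the density bookkeeping that makes the construction consistent.

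The main obstacle is ensuring that the generic choice of $N$ really does make every $A_{\ge n}$ a single interval (or a controlled union handled by Lemma~\ref{one interval} applied piece by piece), rather than an unpredictable union of intervals for which I have no Riesz basis. The danger is that folding can align two endpoints and create a set that is not a single interval no matter how $N$ is chosen. I expect the linear independence hypothesis to rule this out: rational independence guarantees that the folded endpoints never coincide and that, by choosing the target window in the equidistribution step appropriately, the superlevel sets $A_{\ge n}$ degenerate to intervals of the form $[0, c_n/N]$. Verifying this combinatorial-geometric claim about how the folded intervals stack---that generically the depth function is unimodal around $0$---is the delicate step; once it is in hand, everything else is a direct application of the lemmas already proved.
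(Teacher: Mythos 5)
Your skeleton is the paper's: use the density (equidistribution) of the vectors $(\{Na_1\},\dotsc,\{Na_L\},\{Nb_1\},\dotsc,\{Nb_L\})$ in $[0,1]^{2L}$ --- which is exactly where the rational independence hypothesis enters --- to choose $N$ making every $A_{\ge n}$ a single interval, then apply Lemma~\ref{one interval} to each $NA_{\ge n}$, rescale, and finish with Lemma~\ref{lem:main}. But the step you yourself flag as ``the delicate step'' and leave unverified is the entire content of the claim: you never specify a target window for the fractional parts that actually forces the superlevel sets to be intervals, and your guess about what should happen is wrong in its specifics. The paper resolves this with a one-line choice: pick $N$ with $\{Na_1\},\dotsc,\{Na_L\}<\frac12<\{Nb_1\},\dotsc,\{Nb_L\}$. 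Then the depth function $\Phi(t)=|\{j:t+j/N\in S\}|$ increases only at the points $\{Na_i\}/N$, all lying in $[0,1/2N]$, and decreases only at the points $\{Nb_i\}/N$, all lying in $[1/2N,1/N]$; hence $\Phi$ is increasing on $[0,1/2N]$ and decreasing on $[1/2N,1/N]$, so each $A_{\ge n}=\{t:\Phi(t)\ge n\}$ is an interval. Note these intervals straddle $1/2N$; they are \emph{not} of the form $[0,c_n/N]$, and the unimodality is around $1/2N$, not around $0$ as you conjectured (at $t=0$ the depth is whatever the points $j/N\in S$ dictate, and nothing anchors the maximum there). Without exhibiting some such window, the proof is incomplete, since this is precisely the ``combinatorial-geometric'' fact the claim rests on.

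Your hedged fallback is also not viable: if some $A_{\ge n}$ were a union of several intervals each of length at most $\eta$, applying Lemma~\ref{one interval} (or Claim~\ref{using stability claim}) ``piece by piece'' gives Riesz bases for the individual pieces, but the union of Riesz bases for disjoint intervals need not be a Riesz basis for the union --- this is exactly the pitfall dramatized in the Student/Advisor dialogue in the introduction, and circumventing it is the whole point of Lemma~\ref{lem:main}; you cannot invoke it casually at this inner level without a further argument. (A minor bookkeeping slip: the folding identity is $\sum_n|A_{\ge n}|=|S|$, not $|S|/N$, since each $t\in A_k$ accounts for $k$ points of $S$; this is harmless as it is not load-bearing.) Drop the fallback branch, supply the $\frac12$-window choice above, and the rest of your argument coincides with the paper's proof.
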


\begin{proof}
Due to rational independence, the vectors
$(\{Na_1\},\dotsc,\{Na_L\},\{Nb_1\},\dotsc,\linebreak[4]\{Nb_L\})$ are dense in
$[0,1]^{2L}$ and in particular there exists an $N$ such that % positive integers $ k_1,\dotsc,k_L,\linebreak[1]l_1,\dotsc,l_L$ and $N$, such that
% \[\frac{k_j}{N}+\frac{1}{2N}\leq a_j<\frac{k_j+1}{N}\qquad\textrm{and}\qquad \frac{k_j}{N}\leq b_j<\frac{k_j}{N}+\frac{1}{2N}.
% \]
\[
\{Na_1\},\dotsc,\{Na_L\}<\frac 12<\{Nb_1\},\dotsc,\{Nb_L\}.
\]
%Consider the partition of $[0,1]$ into the intervals $[0,2\pi j/N]$.
Now, the function $\Phi(t)=|\{j:t+j/N\in S\}|$ increases at every
$\{Na_i\}/N$ and decreases at every $\{Nb_i\}/N$ so the restrictions above
mean that it is increasing on $[0,1/2N]$ and decreasing on
$[1/2N,1/N]$. In particular $A_{\ge n}=\{t:\Phi(t)\ge n\}$ is an interval
for every $n$.
%The sets $A_{\geq n}$ which correspond to this $N$ are all single intervals (when considered as subsets of $1/N\cdot\mathrm{T}$).
We use lemma \ref{one interval} to find a Riesz basis in $\Z$ for each $NA_{\ge
  n}$, rescale to get a Riesz basis in $N\Z$ for $A_{\ge n}$ and
finally apply lemma \ref{lem:main} to construct a Riesz basis for $S$.
\end{proof}

\section{Finite union of intervals}

\subsection{Auxiliary results} The proof of the theorem in the general
case requires some understanding of the possible orders $\{Na_i\}$ may
hold (compare to the proof of claim \ref{generic case}).
%relays on some number theoretic arguments.
First, we mention a simple observation.

\begin{claim}\label{no L intervals}
Let $L$ be a positive integer and $a_1\leq\dotsb\leq a_{L}$, $b_1\leq\dotsb\leq b_{L}\in [0,1]$ be $2L$ numbers. Set
\[
\Phi(t)=\sum_{l=1}^L\1_{[0,b_l]}(t)+\sum_{l=1}^L\1_{[a_l,1]}(t)\qquad A_{\geq n}=\Phi^{-1}[n,2L];\quad 1\leq n\leq 2L.
\]
Then the sets $A_{\geq n}$ are all unions of at most $L$ intervals
(when considered cyclically). Moreover, if there exists some $n$ such that $A_{\geq
  n}$ is a union of exactly $L$ intervals then the sequences $a_l$ and
$b_l$ interlace, i.e.\ either $a_1\le b_1\le a_2\le \dotsb$ or $b_1\le
a_1\le b_2 \le \dotsb$
\end{claim}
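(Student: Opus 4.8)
The plan is to read $\Phi$ as a step function on the circle $\T=\R/\Z$ and to track where it crosses the level $n$. The key observation is that, away from the glued point $0\equiv 1$, the function $\Phi$ jumps up by one at each $a_l$ and down by one at each $b_l$: the summand $\1_{[0,b_l]}+\1_{[a_l,1]}$ produces an up-jump at $a_l$ and a down-jump just past $b_l$, and the identities $\Phi(0^+)=\Phi(1^-)=L$ (in the generic case) show that these data glue consistently on $\T$. Since there are $L$ up-jumps and $L$ down-jumps, the total change of $\Phi$ around $\T$ is zero, so $A_{\ge n}=\{\Phi\ge n\}$ is a finite union of cyclic arcs and the problem becomes one of counting crossings.

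First I would prove the upper bound. Unless $A_{\ge n}$ is all of $\T$ (a single arc), each connected arc of $A_{\ge n}$, traversed counterclockwise, begins at a point where $\Phi$ passes from a value $<n$ to a value $\ge n$; such a transition requires a net positive jump there, so that point carries an up-jump and hence equals some $a_l$. Distinct arcs have distinct starting points, and every starting point lies in $\{a_1,\dots,a_L\}$, so there are at most $L$ arcs. This gives the first assertion with no genericity assumption.

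For the second assertion I would analyze the extremal case by a counting argument. If $A_{\ge n}$ has exactly $L$ arcs, it has exactly $L$ up-crossings of level $n$ and exactly $L$ down-crossings. Each up-crossing consumes at least one up-jump, and there are only $L$ up-jumps in all, so each up-crossing consumes exactly one and no up-jump is spent on a non-crossing; symmetrically for down-jumps. Consequently every $a_l$ is a simple up-crossing ($\Phi$ moving from $n-1$ to $n$) and every $b_l$ a simple down-crossing (from $n$ to $n-1$); in particular $\Phi$ takes only the values $n-1$ and $n$, and the $2L$ points are pairwise distinct, since a coincidence $a_l=b_{l'}$ would turn that point into a non-crossing. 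The jumps must then alternate up--down around $\T$, and because the $a_l$ and the $b_l$ are each already sorted, cutting this cyclic alternating pattern at $0$ yields either $a_1\le b_1\le a_2\le\cdots$ or $b_1\le a_1\le b_2\le\cdots$, which is the claimed interlacing.

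The main obstacle is the bookkeeping at coincident points and at the boundary $0\equiv 1$: multiplicities mean a single location may host several jumps, so the bound ``arcs $\le$ up-jumps'' must be stated as ``arcs $\le$ number of distinct up-jump locations,'' and one must verify that equality forbids all coincidences rather than assuming distinctness from the outset. I expect the cleanest route is to phrase everything in terms of crossings of level $n$ on $\T$, rather than in terms of the individual indicators, so that the two counting steps---``arcs $\le$ up-crossings $\le$ up-jumps $=L$'' and the forced equalities in the extremal case---carry the whole argument.
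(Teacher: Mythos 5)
Your proposal runs on the same engine as the paper's proof. The first half is identical: $\Phi$ jumps up only at the $a_l$ and down only just past the $b_l$, so each connected component of $A_{\ge n}$ is a cyclic interval of the form $[a_l,b_j]$; distinct components have distinct left endpoints drawn from the $L$ values $a_1,\dotsc,a_L$, whence at most $L$ components. For the second half the paper argues the contrapositive in two lines: if the sorted sequences fail to interlace, some gap (say $[a_l,a_{l+1}]$) contains no $b_j$, so $\Phi$ cannot drop below level $n$ between $a_l$ and $a_{l+1}$, and a component starting at $a_l$ rules out one starting at $a_{l+1}$, capping the count at $L-1$. Your direct version --- exactly $L$ components force each up-crossing to consume exactly one up-jump and each down-crossing one down-jump, hence alternation around $\T$ --- is the same counting run forward; it is correct in outline but costs more bookkeeping, and it quietly avoids the paper's (glossed-over) case split between a $b$-free $a$-gap and an $a$-free $b$-gap.

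There is, however, one concretely false step: your assertion that in the extremal case ``the $2L$ points are pairwise distinct, since a coincidence $a_l=b_{l'}$ would turn that point into a non-crossing,'' and likewise your closing plan to ``verify that equality forbids all coincidences.'' With the closed indicators in the claim, a point $t_0$ with $a_l=b_{l'}=t_0$ is a spike: $\Phi(t_0)=\Phi(t_0^-)+1$ and $\Phi(t_0^+)=\Phi(t_0)-1$, so $t_0$ is simultaneously an up-crossing and a down-crossing, contributing the degenerate component $\{t_0\}=[a_l,b_{l'}]$. Concretely, take $L=2$, $a=(0.2,\,0.5)$, $b=(0.2,\,0.7)$, $n=3$: then $A_{\ge 3}=\{0.2\}\cup[0.5,0.7]$ has exactly $L$ components while $a_1=b_1$. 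Fortunately the error is harmless for the statement, because the claimed interlacing uses non-strict inequalities: your alternation argument survives ties, since at a spike the up-jump precedes the down-jump (an end of one component can never coincide with the start of the next, else the two components would merge), so the cyclic pattern is still $a,b,a,b,\dotsc$ with possible equalities, and cutting at $0$ yields $a_1\le b_1\le a_2\le\dotsb$ or $b_1\le a_1\le\dotsb$ as required. So the fix is to replace ``equality forbids coincidences'' by ``the alternation tolerates coincidences.'' (Also handle the trivial edge case $A_{\ge n}=\T$, which has no crossings at all; it can only realize the extremal count when $L=1$, where interlacing is automatic.)
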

\begin{proof}%\marginpar{ I removed the if and only if condition, we don't use it. added to the claim "Moreover" and "the" in the proof changed "sequence does not" to "sequences do not"}
%Set $a_0=0$ and $b_0=1$.
The function $\Phi$ increases only in the $a_l$ and decreases only in
the $b_l$. Hence each set $A_{\geq n}$ is a union of cyclic intervals of the
form $[a_l,b_j]$, $1\leq l,j\leq L$. Since there are only $2L$
potential end-points, the sets $A_{\geq n}$ are all unions of at most $L$ intervals.

Assume that the sequences do not interlace. This means that, for some $1\leq l<L$, no $b_j$ lies in the interval $[a_l, a_{l+1}]$. It follows that a set $A_{\geq n}$ which contains an interval with left point at $a_l$, does not contain an interval with left point at $a_{l+1}$. Hence, the sets $A_{\geq n}$ are all unions of at most $L-1$ intervals.
\end{proof}

%We will use also the following Lemma.
With claim \ref{no L intervals}, it is natural to investigate
interlacement. Let us state our main claim.

\begin{lemma}\label{ihatethislemma}
Let $a_{1},\dotsc,a_{L},b_{1},\dotsc,b_{L}\in\mathbb{R}$ be all different.
Then there exist infinitely many $N\in\mathbb{N}$ such that $\{Na_{i}\}$
and $\{Nb_{i}\}$ do not interlace.
%Let $A=\{a_1,\dotsc,a_L\}\subset \R$.
%Then there exist infinitely many $N\in\mathbb{N}$ such that no
%element %\marginpar{removed "there are"}
%of the form $\{Na\}$, $a\in A$ lies strictly between $\{Na_{1}\}$ and $\{Na_{2}\}$.
\end{lemma}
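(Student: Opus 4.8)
The plan is to translate interlacement into a statement about the cyclic order of the $2L$ points $\{Na_1\},\dots,\{Na_L\},\{Nb_1\},\dots,\{Nb_L\}$ on the circle $\T=\R/\Z$, and then to use equidistribution. Label each point as type $a$ or type $b$ according to its origin; interlacement is exactly the assertion that, after sorting, consecutive points alternate in type. Writing $v=(a_1,\dots,a_L,b_1,\dots,b_L)\in\R^{2L}$, each $N$ produces the point $Nv\bmod1\in\T^{2L}$, and the set $U\subseteq\T^{2L}$ of configurations having $2L$ distinct coordinates that do \emph{not} alternate is open; it is nonempty precisely because $L\ge2$ (for $L=1$ two points always alternate, so we assume $L\ge2$, which is the only case used). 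The goal thus becomes: there are infinitely many $N$ with $Nv\in U$.

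The next step is to pass to the orbit closure. Let $G=\overline{\{Nv\bmod1:N\in\Z\}}$, a closed subgroup of $\T^{2L}$ with identity component a subtorus $G_0$. By recurrence the forward orbit $\{Nv\}_{N\ge1}$ is already dense in $G$, so the rotation $x\mapsto x+v$ is uniquely ergodic on $G$ and $\{Nv\}_{N\ge1}$ equidistributes for the Haar measure $m_G$. Since $U$ is open and $\operatorname{supp}m_G=G$, we get $m_G(U)>0$ as soon as $U\cap G\neq\emptyset$, and then equidistribution forces a positive proportion of the $Nv$ into $U$. Everything therefore reduces to the single assertion $U\cap G\neq\emptyset$: the orbit closure contains \emph{some} distinct, non-alternating configuration.

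To produce such a point I would split on the arithmetic of the differences, exploiting that $L\ge2$ supplies at least two points of the same type. If some same-type difference, say $a_i-a_j$, is irrational, then $G_0$ surjects onto $\T$ under the difference map $x\mapsto x_i-x_j$; hence $G$ contains a point at which $x_i=x_j$. A small motion inside $G_0$ in the direction that opens this gap separates the two coordinates by an arbitrarily small amount, keeping them mutually closest (hence adjacent in the sorted order) while leaving the other coordinates distinct — a point in $U$. If instead \emph{all} same-type differences are rational, then $(\{Na_i\})_i$ and $(\{Nb_j\})_j$ are periodic in $N$; choosing a same-type pair $a_i,a_j$ and letting $N$ run over the multiples of the denominator of $a_i-a_j$ forces $\{Na_i\}=\{Na_j\}$, a same-type coincidence, for infinitely many $N$ directly.

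The main obstacle is exactly this dependency bookkeeping, together with one genuine subtlety: a same-type coincidence destroys alternation only if it is not absorbed into a coincidence with the opposite type, since a triple point $a=b=a$ still satisfies the nonstrict interlacing inequalities. I would dispose of this by keeping the freedom to choose \emph{which} same-type pair to collide and to perturb within $G$: the loci on which an additional opposite-type coincidence is \emph{forced} correspond to extra rational relations, and whenever they block one choice they drive the configuration deeper into the fully rational regime, where the explicit collision works. Checking that these cases interlock to cover every configuration, with no configuration slipping through, is where the real work (and, presumably, the name of the lemma) resides.
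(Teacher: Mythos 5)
Your soft reduction (orbit closure $G=\overline{\{Nv\}}\subseteq\T^{2L}$, unique ergodicity of the rotation, equidistribution, openness of the target set) is sound, and your aside that only $L\ge2$ matters is correct (for $L=1$ any two points interlace and the statement fails). But the step you reduce everything to --- $U\cap G\neq\emptyset$, with $U$ the set of configurations having $2L$ \emph{distinct}, non-alternating coordinates --- is not merely unproven: it is false for some admissible data, so the route fails as formulated. Take $L=2$, $\alpha$ irrational, and $a_1=\alpha$, $a_2=\alpha+\tfrac12$, $b_1=\alpha+\tfrac14$, $b_2=\alpha+\tfrac34$ (all distinct mod $1$). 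Here $G=\bigl\{\bigl(t,\,t+\tfrac j2,\,t+\tfrac j4,\,t+\tfrac{3j}4\bigr):t\in\T,\ j\in\Z/4\Z\bigr\}$, four circles. On the components $j=1,3$ the four coordinates are distinct and perfectly alternating; on $j=0$ all four coincide; on $j=2$ the two $a$'s coincide at $t$ and the two $b$'s at $t+\tfrac12$, which does \emph{not} interlace (the chain $u\le w\le u$ with $u\neq w$ is impossible) but has tied coordinates. So every point of $G$ with distinct coordinates alternates, i.e.\ $U\cap G=\emptyset$, while the lemma holds, witnessed exactly by $N\equiv2\pmod 4$. The repair is to replace $U$ by the full open set of non-interlacing configurations (interlacement, defined by nonstrict inequalities, is a closed condition, so its complement is open); but then your ``collision plus small motion in $G_0$'' construction must be able to produce tied patterns such as a double-$a$ facing a double-$b$, not merely an adjacent same-type pair of distinct points.

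Even after this repair, the crux --- that $G$ always meets the non-interlacing set --- is precisely what your final paragraph defers, and the two cases you sketch do not close it: in the irrational-difference case the perturbation inside $G_0$ can be blocked by rational relations gluing an opposite-type coordinate to the colliding pair (you name this obstruction but do not resolve it), and in the rational case a same-type collision along multiples of a denominator can be absorbed into a total collision that still interlaces (in the example above, $N\equiv0\pmod4$ collides all four points; only the sub-progression $N\equiv2\pmod4$ works, and nothing in your recipe selects it). Completing the argument would require an induction or structure analysis over the lattice of rational relations among the $a_i,b_j$ --- quite plausibly resembling the ``long and rather cumbersome'' first proof the authors mention discarding. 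The published proof is entirely different and sidesteps all of this: it computes the second moment $S_K=\sum_{N\le K}\bigl|\sum_j e(Na_j)-\sum_j e(Nb_j)\bigr|^2=2LK+O(1)$ (a computation that, like your approach, silently needs the $a_i,b_j$ to be distinct mod $1$, not just as reals) and shows by a one-dimensional projection argument that interlacement forces $s_N\le4$, which settles $L\ge3$ at once, with a Dirichlet-approximation refinement handling $L=2$. So the proposal, while a legitimately different and interesting strategy, has a genuine gap at its central step, and as stated the reduction itself is refuted by the example above.
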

Again, the term ``interlace'' should be understood cyclically: it
means that for some permutations $\sigma$ and $\tau$ of $\{1,\dotsc,L\}$
and for some $x\in[0,1]$,
\[
\{Na_{\sigma(1)}+x\}\le\{Nb_{\tau(1)}+x)\}\le\{Na_{\sigma(2)}+x\}\le\dotsb\le\{Nb_{\tau(L)}+x\}
\]
($x$ can always be taken to be $-Na_{\sigma(1)}$).
%Again, the term ``between'' is to be understood cyclically. Since we do
%not make a claim about the order of $\{Na_{1}\}$ and $\{Na_{2}\}$
%then the lemma should be understood to mean that when you remove from
%the circle $\T$ the points $\{Na_{1}\}$ and $\{Na_{2}\}$,
%one of the remaining intervals is empty of points of the form
%$\{Na\}$, $a\in A$.

Since the proof of Lemma \ref{ihatethislemma} has a different flavour
than the rest of the argument, we postpone it to the next section.

\subsection{Proof of the main result}

We are now ready to prove the theorem. %\marginpar{I left the statement of the theorem, but I think it should just be moved to the intro}
%\begin{theorem}
%Let $S\subseteq [0,1]$ be a finite union of intervals intervals.
%Then there exists a sequence $\L\subseteq\Z$ such that $E(\L)$ is a Riesz basis over $S$.
%\end{theorem}
\begin{proof} We prove the ``moreover clause'' of the theorem
  i.e.\ assume that $S\subset[0,1]$ and find a $\L\subset\Z$ such that
  $E(\L)$ is a Riesz basis for $S$. We use induction on $L$, the number of intervals $S$ is constructed of. The case $L=1$ was checked in lemma \ref{one interval}. Assume that the theorem holds for all positive integers smaller then $L$.
 Denote $S=\cup_{l=1}^L[a_l,b_l]$. By lemma \ref{ihatethislemma} there
 exist arbitrarily large $N$ such that
%there are no elements
%of the form $\{Nb_l\}$ strictly between $\{Na_{1}\}$ and $\{Na_{2}\}$,
%when considered cyclically. This implies that
the sequences
$\{Na_l\}$ and $\{Nb_l\}$ do not interlace. Require also that $N$ is so big that every interval $[a_l,b_l]$ contains at least one element of the form $k/N$.

%Consider the partition of $[0,1]$ into the intervals $[0, j/N]$.
%\marginpar{ I must say that I think this paragraph is another case of 'less explanation is more clear' but i hesitate to change. Your choice}
Examine now the sets $A_{\ge n}$. Because each interval $[a_l,b_l]$
contains an element of $\frac 1N \Z$, then $[a_l,b_l]$ is partitioned
into translates of
$[\{Na_l\}/N,1/N]$, $[0,\{Nb_l\}/N]$ and $[0,1/N]$. We now apply
claim \ref{no L intervals} with
\[
a_i^\textrm{claim \ref{no L intervals}}=\{Na_i\}\qquad
b_i^\textrm{claim \ref{no L intervals}}=\{Nb_i\}\qquad
\]
and get that the $A_{\ge n}^\textrm{claim \ref{no L intervals}}$ have
at most $L-1$ intervals. But each $A_{\ge n}$ is $\frac 1N\times$ some
$A_{\ge n}^\textrm{claim \ref{no L intervals}}$ (not necessarily the
same $n$, because of the pieces $[0,1/N]$, but this is not
important).
%By claim \ref{no L intervals}, the sets $E_{\geq n}$ which correspond
%to this partition are unions of at most $(L-1)$ intervals (when
%considered as subsets of $1/N\cdot\mathrm{T}$).
Hence, we may apply our inductive assumption to construct Riesz bases
in $\Z$
for all the sets $NA_{\ge n}$, rescale to get Riesz bases in $N\Z$
for %\marginpar{changed lemma to Lemma}
the sets $A_{\ge n}$ and then apply Lemma \ref{lem:main} to finish the induction.
\end{proof}

 In a similar way one can show the following corollaries of the theorem, we omit the proofs.
\begin{corollary}%\marginpar{added L intervals - else it is not clear who is L}
Let $S\subseteq [0,1]$ be a union of $L$ intervals and fix $N>0$. Assume that $m/N\leq|S|<(m+1)/N$ where $m$ is a positive integer. Then $\L$ of theorem 1 can be chosen to satisfy
    \[
    \cup_{j=0}^{m-2L-1}(N\Z+j)\subseteq \L \subseteq \cup_{j=0}^{m+2L}(N\Z+j).
    \]
\end{corollary}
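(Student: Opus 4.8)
The plan is to run a single step of the construction of Lemma~\ref{lem:main} at the prescribed scale $N$ and then simply count which residue classes modulo $N$ are used. First I would form the folding function
\[
\Phi(t)=\big|\{j\in\{0,\dots,N-1\}:t+\tfrac jN\in S\}\big|,\qquad t\in\big[0,\tfrac1N\big],
\]
so that $A_{\ge n}=\{t:\Phi(t)\ge n\}$. By the theorem each $NA_{\ge n}\subseteq[0,1]$ carries a Riesz basis in $\Z$ (built with its own internal scales, but that is invisible to what follows); rescaling gives $\L_n\subseteq N\Z$ that is a Riesz basis for $A_{\ge n}$, and Lemma~\ref{lem:main} assembles these into $\L=\bigcup_{n=1}^N(\L_n+n)$. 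Since every $\L_n\subseteq N\Z$, the inclusion $\L\subseteq\bigcup_{n=1}^N(N\Z+n)$ is automatic, so the whole content of the statement is to control which shifts $n$ occur. I would record two elementary facts: $A_{\ge n}\neq\emptyset$ exactly when $n\le\max\Phi$, so no shift beyond $\max\Phi$ appears; and $A_{\ge n}=[0,\tfrac1N]$ exactly when $n\le\min\Phi$, in which case I am free to take $\L_n=N\Z$ (an orthogonal basis after scaling), so that the \emph{whole} lattice $N\Z+n$ lies in $\L$.

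It then remains to bound $\max\Phi$ and $\min\Phi$. The function $\Phi$ is a step function on $[0,\tfrac1N]$ whose jumps are a single increase at each $\{Na_l\}/N$ and a single decrease at each $\{Nb_l\}/N$; in particular its total rise is at most $L$, whence $\max\Phi-\min\Phi\le L$. On the other hand, integrating term by term, $\int_0^{1/N}\Phi(t)\,dt=\sum_{j}\int_0^{1/N}\1_S\big(t+\tfrac jN\big)\,dt=|S|$, so the mean value of $\Phi$ on $[0,\tfrac1N]$ equals $N|S|\in[m,m+1)$. Since $\min\Phi\le N|S|\le\max\Phi$, combining this with the variation bound gives $\min\Phi\ge m-L$ and $\max\Phi\le m+L$. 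Thus at most $m+L$ shifts are used, and at least $m-L$ of them carry the full lattice $N\Z+n$.

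Finally I would translate these counts into the asserted inclusions. Relabelling the residue system of Lemma~\ref{lem:main} to $\{0,1,\dots,N-1\}$ (that is, replacing $\L_n+n$ by $\L_n+(n-1)$; the Vandermonde step in the proof of the lemma uses only that the residues are distinct, so this relabelling is harmless) turns the used shifts into $\{0,\dots,\max\Phi-1\}\subseteq\{0,\dots,m+2L\}$ and the full lattices into $\{0,\dots,\min\Phi-1\}\supseteq\{0,\dots,m-2L-1\}$, which is exactly the stated sandwich. Note that the $\pm L$ obtained above is sharper than the claimed $\pm 2L$; the surplus is there precisely to absorb the one genuine nuisance, which I expect to be the main obstacle: boundary bookkeeping. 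Because $S\subseteq[0,1]$ and not a circle, $\Phi$ need not be exactly cyclic at the two ends of $[0,\tfrac1N]$, and the fractional parts $\{Na_l\},\{Nb_l\}$ may coincide or fall on $0$. Each such degeneracy can only perturb the jump count by $O(1)$ per interval, and this is comfortably swallowed by the gap between $L$ and $2L$. The analytic input — that Riesz bases for the pieces $A_{\ge n}$ exist at all — is handed to us by the theorem and Lemma~\ref{lem:main}, so no new estimates are required beyond this counting.
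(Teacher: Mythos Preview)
Your proposal is correct and is precisely the argument the paper has in mind when it writes ``In a similar way one can show the following corollaries of the theorem, we omit the proofs'': fold $S$ at scale $N$, invoke the already-proved theorem to furnish Riesz bases in $N\Z$ for each nonempty $A_{\ge n}$, take $\Lambda_n=N\Z$ whenever $A_{\ge n}=[0,1/N]$, and then apply Lemma~\ref{lem:main}. Your counting via $\min\Phi$ and $\max\Phi$ is exactly right (and, as you note, actually yields the sharper $\pm L$ rather than $\pm 2L$); the relabelling of residues is harmless since $E(\Lambda-1)=e_{-1}\cdot E(\Lambda)$ is an isometric image of $E(\Lambda)$.
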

\begin{corollary}
 Let $S_1\subset S_2\subset\dotsb\subset S_K\subset [0,1]$ be a family of sets, all of which are finite unions of intervals. Then there exist sequences $\L_1\subset \L_2\subset\dotsb\subset \L_K\subset \Z$ such that $E(\L_k)$ is a Riesz basis over $S_k$.
\end{corollary}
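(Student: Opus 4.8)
The plan is to re-run the inductive construction behind the theorem, carrying all $K$ sets along at once with a single modulus $N$ at each stage, so that nesting is inherited for free. Two structural facts drive this. First, folding is monotone: if one fixes $N$ and defines the folded sets $A_{\ge n}^{(k)}\subseteq[0,\tfrac1N]$ for each $S_k$ as in (\ref{eq:defAgen}), then $S_k\subseteq S_{k+1}$ gives $A_{\ge n}^{(k)}\subseteq A_{\ge n}^{(k+1)}$ for every $n$, because the covering count $|\{j:t+j/N\in S\}|$ is monotone in $S$. Second, the assembly step of Lemma \ref{lem:main} preserves nesting in the opposite direction: if the piece-bases satisfy $\Lambda_n^{(k)}\subseteq\Lambda_n^{(k+1)}$ for all $n$, then the unions $\Lambda^{(k)}=\bigcup_n(\Lambda_n^{(k)}+n)$ satisfy $\Lambda^{(k)}\subseteq\Lambda^{(k+1)}$. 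Together these reduce the nested problem for families built from $\le L$ intervals to the same problem for the folded families, which by Claim \ref{no L intervals} consist of $\le L-1$ intervals \emph{once the endpoints of every $S_k$ fail to interlace at $N$}. This is an induction on $L$.

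For the base case $L=1$ (nested single intervals $I_1\subseteq\dotsb\subseteq I_K$, which I may take to have strictly increasing lengths $\ell_1<\dotsb<\ell_K$) I would choose a large $N$ for which every $I_k$ contains a point of $\tfrac1N\Z$ and for which the integer parts $\lfloor N\ell_k\rfloor$ are all distinct; both hold for all sufficiently large $N$. With such an $N$, each folded level $A_{\ge n}^{(k)}$ is the full cell $[0,\tfrac1N]$ when $n\le\lfloor N\ell_k\rfloor$, a single arc when $n=\lfloor N\ell_k\rfloor+1$, and empty otherwise; distinctness of the $\lfloor N\ell_k\rfloor$ means that for each fixed $n$ at most one $k$ produces the genuinely partial arc. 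I then set $\Lambda_n^{(k)}=N\Z$ on full cells, $\emptyset$ on empty levels, and, for the single partial level, a Riesz basis contained in $N\Z$ obtained by applying Lemma \ref{one interval} to the rescaled arc. Since every such basis is automatically a subset of $N\Z$, the inclusions $\emptyset\subseteq\Lambda_n^{(k)}\subseteq N\Z$ hold with no further work, and Lemma \ref{lem:main} assembles nested $\Lambda_1\subseteq\dotsb\subseteq\Lambda_K$.

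For the inductive step I would fix one modulus $N$ that simultaneously places a point of $\tfrac1N\Z$ in every interval of every $S_k$ and makes the endpoints of \emph{every} $S_k$ non-interlacing. Claim \ref{no L intervals} then shows that each folded family $\{A_{\ge n}^{(k)}\}_k$ consists of $\le L-1$ intervals and is nested in $k$; the inductive hypothesis produces nested piece-bases $\Lambda_n^{(1)}\subseteq\dotsb\subseteq\Lambda_n^{(K)}\subseteq N\Z$ for each $n$, and Lemma \ref{lem:main} combines them into the desired nested $\Lambda_1\subseteq\dotsb\subseteq\Lambda_K$ for the $S_k$.

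The main obstacle is exactly the simultaneity demanded in the inductive step: Lemma \ref{ihatethislemma} supplies, for each individual $S_k$, infinitely many non-interlacing moduli, but here a single $N$ must be non-interlacing for all $k$ at once, and non-interlacement of the pooled endpoint set does not descend to sub-families (a sub-collection of a non-alternating cyclic word can alternate). I expect this to be the crux. I would resolve it by going back into the proof of Lemma \ref{ihatethislemma} and showing that its good moduli form a set stable under finite intersection --- for instance that, via the same equidistribution input, non-interlacement holds for a positive proportion of $N$ inside a fixed arithmetic progression common to all the configurations --- so that the finitely many nested $S_k$ share a good $N$. With that in hand the remainder is the bookkeeping already present in the proof of the theorem.
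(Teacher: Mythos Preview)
The paper gives no proof, only the remark that both corollaries follow ``in a similar way.'' Your structural observations are correct---folding by a fixed $N$ is monotone in $S$, and Lemma~\ref{lem:main} preserves nesting of the piece-bases---and the obstacle you flag is genuine. But your proposed resolution does not close it. Tracing the argument of Lemma~\ref{ihatethislemma}, the non-interlacing moduli for a single configuration of $L$ intervals are only shown to have lower density of order $c/L^2$, and finitely many sets of small positive density need not meet; nothing in that proof produces an arithmetic progression ``common to all configurations,'' since the good moduli for distinct $S_k$ are governed by unrelated Diophantine data. So your inductive step, as written, has a real gap, and it is not clear that re-entering the proof of Lemma~\ref{ihatethislemma} repairs it.

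The route the authors presumably intend bypasses this entirely and goes through the immediately preceding Corollary~1. That corollary needs no induction on $L$: for an \emph{arbitrary} $N$, fold $S$; each $A_{\ge n}$ is again a finite union of intervals, so the already-proved main theorem (used as a black box) supplies a Riesz basis $\L_n\subset N\Z$, and Lemma~\ref{lem:main} assembles $\L=\bigcup_n(\L_n+n)$. Since $\min\Phi\ge N|S|-L$ and $\max\Phi\le N|S|+L$, all but $O(L)$ levels are full ($\L_n=N\Z$) or empty ($\L_n=\emptyset$), which is exactly the sandwich in Corollary~1. Corollary~2 is then a one-line consequence: take a single $N$ so large that $\lfloor N|S_{k+1}|\rfloor-\lfloor N|S_k|\rfloor>4L+1$ for every $k$; the sandwiches for consecutive $k$ then nest, forcing $\L_k\subset\L_{k+1}$. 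No simultaneous non-interlacement is required, and Lemma~\ref{ihatethislemma} plays no further role.
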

\begin{remark}It is not difficult to check that the Riesz bases we
  construct all have bounded discrepancy, namely, if the measure of
  $S$ is $\alpha$, then the Riesz basis $\L$ we construct has the
  property that for any interval $I\subset\R$,
\[
\big| |\L\cap I|-\alpha |I|\big| \le C.
\]
Of course, this is due to us eventually relying on Paley-Wiener
stability. Riesz bases constructed by applying Paley-Wiener to scaled
copies of $\Z$ have bounded discrepancy, and this property is
preserved throughout.
\end{remark}

\section {Proof of lemma \ref{ihatethislemma}}

%In this section we prove lemma \ref{ihatethislemma}. With this the
%proof of the theorem will be complete. Recall that it
%claims that it is not possible for $\{Na_j\}$ and $\{Nb_j\}$ to
%interlace for almost all $N$.
In this section we prove lemma \ref{ihatethislemma}, with this the %\marginpar{I changed the first paragraph}
proof of the theorem will be complete. Our original proof of this lemma was long and rather
cumbersome (the interested reader can see it in the first arXiv version of
the paper), the proof we present here was shown to us by Fedor Nazarov.

Recall that Lemma \ref{ihatethislemma}
claims that it is not possible for $\{Na_j\}$ and $\{Nb_j\}$ to
interlace for almost all $N$.
The proof revolves around the following quantities:
\[
s_{N}=\bigg|\sum_{j=1}^{L}e(Na_{j})-\sum_{j=1}^{L}e(Nb_{j})\bigg|^{2}\qquad S_{K}=\sum_{N=1}^{K}s_{N}.
\]
We will show that $S_{K}$ is large, while whenever $\{Na_{i}\}$
and $\{Nb_{i}\}$ interlace, $s_{N}$ is small, which will provide
a contradiction.

We first show that $S_{K}$ is large. We open the square in the definition
of $s_{N}$, using the notations
\[
\xi_{j}=\begin{cases}
e(a_{j}) & j\in\{1,\dotsc,L\}\\
e(b_{j-L}) & j\in\{L+1,\dotsc,2L\}
\end{cases}\quad
\eps_{j}=\begin{cases}
1 & j\in\{1,\dotsc,L\}\\
-1 & j\in\{L+1,\dotsc,2L\}
\end{cases}
\]
and we get
\begin{align}
S_{K}
&=\sum_{j=1}^{2L}\sum_{k=1}^{2L}\eps_j\eps_k\sum_{N=1}^{K}\xi_{j}^{N}\overline{\xi_{k}}^{N}\nonumber\\
&=2LK+\sum_{j\ne
  k}\eps_j\eps_k\frac{\xi_{j}\overline{\xi_{k}}-(\xi_{j}\overline{\xi_{k}})^{K+1}}{1-\xi_{j}\overline{\xi_{k}}}
=2LK+O(1)\label{eq:SN large}
\end{align}
where the constant implicit in the $O(1)$ might depend on $L$ and
on the $\xi_{j}$ but not on $K$.

We now bound $s_{N}$ in the case that $\{Na_{i}\}$ and $\{Nb_{i}\}$
interlace. Define $\alpha=\sum e(Na_{j})-\sum e(Nb_{j})$ so that
$s_{N}=|\alpha|^{2}$ (we omit the dependency of $\alpha$ on $N$
in the notation). We assume $\alpha\ne0$ since otherwise there is
nothing to prove. Let $P$ be the map from $\mathbb{C}$ to $\mathbb{R}$
defined by $Px=\langle x,\alpha/|\alpha|\rangle_{\R^2}$ (where $x$ and $\alpha$ are thought of as points in $\R^2$),
i.e.\ a projection on
the line $\alpha \mathbb{R}$ and then a rotation to $\mathbb{R}$.
%\ $Px=\langle x,\alpha/|\alpha|\rangle$.
We get
\[
s_{N}=|P(\alpha)|^{2}=\bigg|\sum_{j=1}^{L}P(e(Na_{j}))-\sum_{j=1}^{L}P(e(Nb_{j}))\bigg|^{2}.
\]
Let now $\left\{ d_{j}\right\} _{j=1}^{2L}$ be the collection of
points $\{e(Na_j)\}\cup\{e(Nb_j)\}$ %\marginpar{I changed the "numbers" to "points" (not sure about this) and $\alpha$ to $\alpha/\|\alpha\|$} $\{e(Na_{j})\}_{j=1}^{L}\cup\{e(Nb_{j})\}_{j=1}^{L}$ 
arranged
counterclockwise starting from $\alpha/|\alpha|$.
%Let now $\left\{ d_{j}\right\} _{j=1}^{2L}$ be the collection of
%numbers $\{e(Na_{j})\}_{j=1}^{L}\cup\{e(Nb_{j})\}_{j=1}^{L}$ arranged
%counterclockwise starting from $\alpha$.
For example, if $\alpha\in\mathbb{R}^{+}$
then $0\le\arg d_{1}\le\arg d_{2}\le\dotsb$. Under the assumption
of interlacement, we see that either $d_{2j}\in\{e(Na_{k})\}$ and
$d_{2j+1}\in\{e(Nb_{k})\}$ or vice-versa and hence in both cases
\[
s_{N}=\bigg|\sum_{j=1}^{L}P(d_{2j-1})-P(d_{2j})\bigg|^{2}.
\]
The crucial observation is now that $P(d_{2j-1})-P(d_{2j})$ are positive
up to some $j_{0}$, and negative from that point on. This means that
\[
\bigg|\sum_{j=1}^{L}P(d_{2j-1})-P(d_{2j})\bigg|
\le\max\bigg\{\sum_{j=1}^{j_{0}}P(d_{2j-1})-P(d_{2j}),\sum_{j=j_{0}+1}^{L}P(d_{2j})-P(d_{2j-1})\bigg\}.%\label{eq:posneg}
\]%\end{multline}
Further, the intervals $[P(d_{2j-1}),P(d_{2j})]$ are disjoint in
both regimes (i.e.\ up to $j_{0}$ and from $j_{0}+1$ on), so both
sums on the right-hand side of the equation above are bounded by $2$.
We get
\begin{equation}
s_{N}\le4\textrm{ whenever $\{Na_{i}\}$ and $\{Nb_{i}\}$ interlace.}\label{eq:sn4}
\end{equation}
This immediately finishes the case of $L\ge3$. Indeed, if for all
$N$ but a finite number $\{Na_{i}\}$ and $\{Nb_{i}\}$ interlace
then as $K\to\infty$
\[
2LK+O(1)\stackrel{\textrm{(\ref{eq:SN large})}}{=\vphantom{\le}}S_{K}=\sum_{N=1}^{K}s_{N}\stackrel{\textrm{(\ref{eq:sn4})}}{\le}4K+O(1)
\]
(the $O(1)$ on the right-hand side is due to the finite set of bad
$N$) leading to a contradiction.

To do the case of $L=2$ with the same approach we need to strengthen
(\ref{eq:sn4}) slightly. Fix some small enough $\epsilon>0$%, such that $R=1/\epsilon^4$ is an integer
. By the Dirichlet approximation theorem,
given any integer $N$ one can find an integer $N\leq n\leq N+\lceil
\eps^{-4}\rceil$ such that $na_{1}$, $na_{2}$, $nb_{1}$, $nb_{2}$ are
all within distance smaller or equal to $\epsilon$ from the integers.
%So in this case (with $c$ being an absolute constant),
In this case we estimate $s_n$ directly from the definition
(without using $P$ as before) and get
\begin{multline*}
s_{n}^{1/2}=|e(na_{1})+e(na_{2})-e(nb_{1})-e(nb_{2})|\\
\leq |e(na_{1})-1|+|e(na_{2})-1|+|1-e(nb_{1})|+|1-e(nb_{2})|\leq C\epsilon.
\end{multline*}
Choose $\epsilon$ sufficiently small so that the right-hand side is
$<1$. We conclude that, if for all $N\le K$ we have interlacement then there
is a proportion $c_1K$ where $s_N\le 1$, and the rest are
still $\le 4$ by (\ref{eq:sn4}). So we get (under interlacement for
almost every $N$),
%if for all
%$N$ but a finite number $\{Na_{i}\}$ and $\{Nb_{i}\}$ interlace
%then as $K\to\infty$
\[
S_{K}=\sum_{N=1}^{K}s_{N}\le (1-c_1)K\cdot 4 +c_1K\cdot 1
+O(1)\le (4-c)K+O(1).
\]
leading to a contradiction as before.
%Assume $s_{N}>4-\epsilon$ for some $\epsilon>0$
%sufficiently small. Then $|P(d_{1})-P(d_{2})+P(d_{3})-P(d_{4})|>2-\epsilon$
%and a little checking of cases shows that we must have $|P(d_{i})-P(d_{i+1})|\le\epsilon$
%for one of $i\in\{1,2,3,4\}$. This does not necessarily mean that
%$d_{i}$ and $d_{i+1}$ are themselves close, since it is possible
%for the projections to be close without the numbers themselves being
%close, but some more checking of cases shows that this is imposible under $s_{N}>4-\epsilon$
%and hence we may conclude that $|d_{i}-d_{i+1}|\le C_{1}\sqrt{\epsilon}$
%for $C_{1}$ some constant independent of $\epsilon$.

%Next note that for any two real numbers $a$ and $b$, the set of
%$N$ such that $|e(Na)-e(Nb)|\le\delta$ is either of density $\delta$
%(if $a-b\not\in\mathbb{Q}$) or a linear progression with no constant
%term otherwise. We get that for some $\delta>0$ and some $c_{1}>0$
%(which may depend on the $d_{i}$),
%\begin{equation}
%|\{N\le K:\exists i,|d_{i}-d_{i+1}|\le\delta\}|\le(1-c_{1})K\label{eq:ooof}
%\end{equation}
%for $K$ sufficiently large. This means that, with $\epsilon=(\delta/C_{1})^{2}$,
%\begin{multline*}
%\smash{\sum_{N=1}^{K}}\vphantom{\sum^{K}}s_{N}\le(4-\epsilon)K+\epsilon|\{N\le K:\exists i,|d_{i}-d_{i+1}|\le C_{1}\sqrt{\epsilon}\}|\\
%\stackrel{\smash{\textrm{(\ref{eq:ooof})}}}{\le}(4-\epsilon)K+\epsilon(1-c_{1})K=(4-\epsilon c_{1})K
%\end{multline*}
%and we reach a contradiction to (\ref{eq:SN large}) similarly.\qed

\subsection{Acknowledgements}
We thank Fedor Nazarov for showing us the version of the proof of
\ref{ihatethislemma} we presented above. This research was supported by the Israel Science
Foundation, the Jesselson Foundation, and partially by the CNRS.


\begin{thebibliography}{HNP80}
\bibitem{A79}Sergei A. Avdonin, \textcyr{\char202} \textcyr{\char226\char238\char239\char240\char238\char241\char243}
\textcyr{\char238} \textcyr{\char225\char224\char231\char232\char241\char224\char245}
\textcyr{\char208\char232\char241\char241\char224} \textcyr{\char232\char231}
\textcyr{\char239\char238\char234\char224\char231\char224\char242\char229\char235\char252\char237\char251\char245}
\textcyr{\char244\char243\char237\char234\char246\char232\char233}
\textcyr{\char226} \inputencoding{koi8-r}\foreignlanguage{russian}{$L^{2}$}\inputencoding{latin9}
{[}Russian: On the question of Riesz bases of exponential functions
in $L^{2}${]}, Vestnik Leningrad Univ. 13 (1974), 5--12. English
translation in: Vestnik Leningrad Univ. Math. 7 (1979), 203--211.

\bibitem{AM99}Sergei A. Avdonin and William Moran, {\em Sampling and
  interpolation of functions with multi-band spectra and
  controllability problems}. Optimal control of partial differential
  equations (Chemnitz, 1998), 43--51, Internat. Ser. Numer. Math., 133,
  Birkh\"auser, Basel, 1999.

\bibitem{ABM07}Sergei A. Avdonin, Anna Bulanova and William Moran,
  {\em Construction of Sampling and Interpolating Sequences for
    Multi-Band Signals. The Two-Band Case.} Internat. J. Applied
  Math. Comput. Sci. 17:2 (2007), 143--156. Available at:
  \href{http://www.amcs.uz.zgora.pl/?action=paper&paper=329}{\nolinkurl{amcs.uz.zgora.pl}}

\bibitem{BK93}L. Bezuglaya and Victor E. Katsnel\textquotesingle son,
\emph{The sampling theorem for functions with limited multi-band spectrum}.
Z. Anal. Anwendungen 12:3 (1993), 511--534.

\bibitem{F74}Bent Fuglede, \emph{Commuting self-adjoint partial
differential operators and a group theoretic problem}, J. Funct. Anal.
16:1 (1974), 101--121. Available at: \href{http://www.sciencedirect.com/science/article/pii/002212367490072X}{\nolinkurl{sciencedirect.com}}

\bibitem{F01}\bysame, \emph{Orthogonal exponentials on
the ball}. Expo. Math. 19:3 (2001), 267--272. Available at: \href{http://www.sciencedirect.com/science/article/pii/S0723086901800050}{\nolinkurl{sciencedirect.com}}

\bibitem{HNP80}Sergey V. Hru\u{s}\u{c}ev, Nikolai K. Nikol\textquotesingle skii
and Boris S. Pavlov, \emph{Unconditional bases of exponentials and
of reproducing kernels}, Complex analysis and spectral theory (Lenin\-grad,
1979/1980), 214--335, Lecture Notes in Math. 864, Springer, Berlin,
1981. Available at: \href{http://www.springerlink.com/content/apx1860h14326462/}{\nolinkurl{springerlink.com}}

\bibitem{IKT03}Alex Iosevich, Nets Katz and Terence Tao, \emph{The
Fuglede spectral conjecture holds for convex planar domains}. Math.
Res. Lett. 10:5 (2003), 559--569. Available at:
\href{http://www.intlpress.com/_newsite/site/pub/pages/journals/items/mrl/content/vols/0010/0005/00019975/index.php}{\nolinkurl{intlpress}}

\bibitem{IK}Alex Iosevich and Mihalis N. Kolountzakis, {\em
  Periodicity of the spectrum in dimension one}. Analysis and PDE 6:4
  (2013), 819--827. Available at:
  \href{http://msp.org/apde/2013/6-4/p03.xhtml}{\nolinkurl{msp.org}},
  \href{http://arxiv.org/abs/1108.5689}{\nolinkurl{arXiv:1108.5689}}

\bibitem{K64}Mikahil \u I. Kadec\textquotesingle, \textcyr{\char210\char238\char247\char237\char238\char229}
\textcyr{\char231\char237\char224\char247\char229\char237\char232\char229}
\textcyr{\char239\char238\char241\char242\char238\char255\char237\char237\char238\char233}
\textcyr{\char207\char224\char235\char229\char255}-\textcyr{\char194\char232\char237\char229\char240\char224}
{[}Russian: The exact value of the Paley-Wiener constant{]}. Dokl.
Akad. Nauk SSSR 155 (1964), 1253--1254. English translation in: Soviet
Math. Doklady 5:2 (1964), 559-561.

\bibitem{K71}Victor E. Katsnel\textquotesingle son, \textcyr{\char206}
\textcyr{\char225\char224\char231\char232\char241\char224\char245}
\textcyr{\char232\char231} \textcyr{\char239\char238\char234\char224\char231\char224\char242\char229\char235\char252\char237\char251\char245}
\textcyr{\char244\char243\char237\char234\char246\char232\char233}
\textcyr{\char226} $L^{2}$ {[}Russian: Exponential bases in $L^{2}${]}.
Funkc. Anal. i Prilo\v zen. 5:1 (1971), 37--47. \href{http://www.mathnet.ru/php/archive.phtml?wshow=paper&jrnid=faa&paperid=2560&option_lang=rus}{\nolinkurl{mathnet.ru}}.
English translation in: Funct. Anal. and Appl. 5:1
(1971), 31--38. \href{http://www.springerlink.com/content/n481414380634700/}{\nolinkurl{springerlink.com}}

\bibitem{K96} \bysame, {\em Sampling and interpolation for functions with
multi-band spectrum: the mean-periodic continuation
method}. Wiener-Symposium (Grossbothen, 1994), 91--132,
Synerg. Syntropie Nichtlineare Syst., 4, Verlag Wiss. Leipzig,
Leipzig, 1996.

\bibitem{K53}Arthur Kohlenberg, \emph{Exact Interpolation of
Band-Limited Functions}. J. Appl. Phys. 24:12 (1953), 1432--1436.
Available at: \href{http://link.aip.org/link/doi/10.1063/1.1721195}{\nolinkurl{aip.org}}

\bibitem{KM06}Mihail N. Kolountzakis and M\'at\'e Matolcsi, \emph{Tiles
with no spectra}. Forum Math. 18:3 (2006), 519--528. Available at:
\href{http://www.degruyter.com/view/j/form.2006.18.issue-3/forum.2006.026/forum.2006.026.xml?format=INT}{\nolinkurl{degruyter.com}},
\href{http://arxiv.org/abs/math/0406127}{\nolinkurl{arXiv:0406127}}

\bibitem{KL11}Gady Kozma and Nir Lev, \emph{Exponential Riesz
bases, discrepancy of irrational rotations and BMO}. J. Fourier Anal.
and Appl. \textbf{17:5} (2011), 879--898. \href{http://www.springerlink.com/content/35g5047463711323/}{\nolinkurl{springerlink.com}},
\href{http://arxiv.org/abs/1009.2188}{\nolinkurl{arXiv:1009.2188}}

\bibitem{L67a} Henry J. Landau, \emph{Necessary density conditions
  for sampling and interpolation of certain entire functions}. Acta
  Math. \textbf{117} (1967) 37--52. Available at: \href{http://www.springerlink.com/content/22h1h1514x501740/}{\nolinkurl{springerlink.com}}

%\bibitem{L67b} \bysame, \emph{Sampling, data transmission,
%  and the Nyquist rate}. Proc. IEEE \textbf{55:10} (1967),
%  1701--1706. Available at: \href{http://dx.doi.org/10.1109/PROC.1967.5962}{\nolinkurl{ieee.org}}

\bibitem{L12}Nir Lev, \emph{Riesz bases of exponentials on multiband
spectra}. Proc. Amer. Math. Soc. 140:9 (2012), 3127--3132. Available
at: \href{http://www.ams.org/journals/proc/2012-140-09/S0002-9939-2012-11138-4/home.html}{\nolinkurl{ams.org}},
\href{http://arxiv.org/abs/1101.3894}{\nolinkurl{arXiv:1101.3894}}

\bibitem{L40} Norman Levinson, \emph{Gap and Density Theorems}. Amer. Math.
Soc. Colloquium Publications, vol. 26. Amer. Math. Soc., New York, 1940.

\bibitem{LS96}Yurii Lyubarskii and Ilya M. Spitkovsky, {\em Sampling and
  interpolation for a lacunary spectrum}. Proc. Roy. Soc. Edinburgh
  Sect. A 126:1 (1996), 77--87. \href{http://dx.doi.org/10.1017/S0308210500030602}{\nolinkurl{cambridge.org}}

\bibitem{LS97}Yurii Lyubarskii and Kristian Seip, \emph{Sampling
and interpolating sequences for multi\-band-limited functions and exponential
bases on disconnected sets}. J. Fourier Anal. Appl. 3:5 (1997), 597--615.
Available at: \href{http://www.springerlink.com/content/d44064614113383u/}{\nolinkurl{springerlink.com}}

\bibitem{MM10}Basarab Matei and Yves Meyer, \emph{Simple quasicrystals
are sets of stable sampling. }Complex Var. Elliptic Equ\emph{. }55:8--10
(2010), 947--964. Available at: \href{http://www.tandfonline.com/doi/abs/10.1080/17476930903394689}{\nolinkurl{tandfonline.com}}

\bibitem{NO}Shahaf Nitzan and Alexander Olevskii, \emph{Revisiting
  Landau's density theorems for Paley-Wiener
  spaces}. C. R. Acad. Sci. Paris 350:9--10 (2012),
  509--512. \href{http://www.sciencedirect.com/science/article/pii/S1631073X12001392}{\nolinkurl{sciencedirect.com}}

\bibitem{OU09}Alexander Olevskii and Alexander Ulanovskii,
\emph{Uniqueness sets for unbounded spectra}. C. R. Acad. Sci. Paris
349:11--12 (2011), 679--681. Available at:
\href{http://www.sciencedirect.com/science/article/pii/S1631073X11001476}{\nolinkurl{sciencedirect.com}}

%  \emph{Interpolation in Bernstein and Paley-Wiener spaces}. J. Funct. Anal.
%  256:10 (2009), 3257--3278. \href{http://www.sciencedirect.com/science/article/pii/S0022123608004023}{\nolinkurl{sciencedirect.com}}

\bibitem{PW34}Ramond E. A. C. Paley and Norbert Wiener,
  \emph{Fourier transforms in the complex domain}.
  Amer. Math. Soc. Colloquium Publications vol. 19. Amer. Math. Soc., New York, 1934.

\bibitem{P79}Boris S. Pavlov, \textcyr{\char193\char224\char231\char232\char241\char237\char238\char241\char242\char252}
\textcyr{\char241\char232\char241\char242\char229\char236\char251}
\textcyr{\char253\char234\char241\char239\char238\char237\char229\char237\char242}
\textcyr{\char232} \textcyr{\char243\char241\char235\char238\char226\char232\char229}
\textcyr{\char204\char224\char234\char229\char237\char245\char224\char243\char239\char242\char224}
{[}Russian: The basis property of a system of exponentials and the
condition of Muckenhoupt{]}. Dokl. Akad. Nauk SSSR 247 (1979), 37--40.
English translation in: Soviet Math. Dokl. 20 (1979), 655--659.

\bibitem{S95}Kristian Seip, \emph{A simple construction of exponential
bases in $L^{2}$ of the union of several intervals}. Proc. Edinburgh
Math. Soc. 38:1 (1995), 171--177. Available: \href{http://dx.doi.org/10.1017/S0013091500006295}{\nolinkurl{cambridge.org}}

\bibitem{S00}Robert S. Strichartz, \emph{Mock Fourier series
and transforms associated with certain Cantor measures}. J. Anal.
Math. 81 (2000), 209\textendash{}238. Available at: \href{http://www.springerlink.com/content/pp561301214h77h9/}{\nolinkurl{springerlink}}

\bibitem{T04}Terence Tao, \emph{Fuglede's conjecture is false
in 5 and higher dimensions}. Math. Res. Lett. 11:2 (2004), 251\textendash{}258.
Available at: \href{http://www.intlpress.com/_newsite/site/pub/pages/journals/items/mrl/content/vols/0011/0002/00020021/index.php}{\nolinkurl{intlpress.com}}

\bibitem{Y01} Robert M. Young, \emph{An introduction to
  nonharmonic Fourier series}. Revised first edition. Academic Press,
  Inc., San Diego, CA, 2001.

\end{thebibliography}
\end{document}